\documentclass{amsart}
\usepackage{amsfonts}
\usepackage{latexsym}
\usepackage{amssymb}
\usepackage{amsmath}
\usepackage{enumerate}
\usepackage{latexsym,color,amsmath,amsthm,amssymb,amscd,amsfonts}
\usepackage{graphicx}
\usepackage[normalem]{ulem}

\usepackage[utf8]{inputenc}
\usepackage[T1]{fontenc}

\DeclareMathOperator*{\interior}{int}


\newcommand{\F}{\mathbb F}

\newcommand{\R}{\mathbb R}
\newcommand{\N}{\mathbb N}

\renewcommand{\H}{\mathbb H}

\newcommand{\elip}{\mathcal E}


\newtheorem{thm}{Theorem}[section]

\newtheorem{lemma}[thm]{Lemma}

\newtheorem{example}[thm]{Example}
\newtheorem{propo}[thm]{Proposition}
\theoremstyle{remark}
\newtheorem*{rmk}{Remark}

\newtheorem{problem}{Problem}

\begin{document}


\title{Best approximation of functions by log-polynomials}

\author[R. Alonso-Guti\'errez]{David Alonso-Guti\'errez}
\email{alonsod@unizar.es}
\address{Departamento de Matemáticas. Universidad de Zaragoza, Spain}

\author[B. Gonz\'alez Merino]{Bernardo Gonz\'alez Merino}
\email{bgmerino@um.es}
\address{Departamento de Matem\'atica Aplicada, 
Universidad de Murcia, 
Spain}

\author[R. Villa]{Rafael Villa}
\email{villa@us.es}
\address{Departamento de Análisis Matemático. Universidad de Sevilla, Spain}

\subjclass[2010]{Primary 52A21, 46B20, Secondary 52A40}
\keywords{John ellipsoid, log-concave functions, homogeneous polynomials}

\thanks{This research is a result of the activity developed within the framework of the Programme in Support of Excellence Groups of the Regi\'on de Murcia, Spain, by Fundaci\'on S\'eneca, Science and Technology Agency of the Regi\'on de Murcia. 
The first  author is partially supported by MICINN Project PID-105979-GB-I00 and DGA Project E48\_20R. 
The second author is partially supported by Fundaci\'on S\'eneca project 19901/GERM/15, Spain. 
The second and third authors are partially supported by MICINN Project PGC2018-094215-B-I00 Spain.}

\date{\today}

\begin{abstract}
Lasserre \cite{lasserre} proved that for every compact set $K\subset\R^n$ and every even number $d$ there exists
a unique homogeneous polynomial $g_0$ of degree $d$ with $K\subset G_1(g_0)=\{x\in\R^n:g_0(x)\leq 1\}$
minimizing $|G_1(g)|$ among all such polynomials $g$ fulfilling the condition $K\subset G_1(g)$. This result extends the notion of the Löwner ellipsoid, not only from convex bodies  to arbitrary compact sets (which was immediate if $d=2$ by taking convex hulls), but also from ellipsoids to level sets of homogeneous polynomial of an arbitrary even degree.

In this paper we extend this result for the class
of non-negative log-concave functions in two different ways. One of them is the straightforward extension of the known results, and the other one is a suitable extension with uniqueness 
of the solution in the corresponding problem
and a characterization in terms of some 'contact points'.


\end{abstract}

\maketitle

\section{Introduction}

In
\cite{agjv}
the authors proved that for any non-negative integrable log-concave function $f:\R^n\to[0,+\infty)$ with $\Vert f\Vert_\infty=f(0)=1$, there exists a
unique pair $(t_0,\elip_0)$, with $t_0\in(0,1]$ and $\elip_0\subset\R^n$ an ellipsoid such that
\begin{equation}\label{ineq:func0}
t_0\chi_{\elip_0}\le f
\end{equation}
maximizing
$$
\int_{\R^n} t\chi_{\elip}(x)\,dx=t|\elip|
$$
among all the pairs $(t,\elip)$ verifying (\ref{ineq:func0}), where  for any measurable set $K\subseteq\R^n$, $\chi_K$ denotes its characteristic function and $|K|$ denotes its $n$-dimensional Lebesgue measure.

This is a functional version of John's celebrated theorem \cite{J},
 which provides the existence of a unique maximal volume ellipsoid contained in any convex body
$K\subset\R^n$. This maximal volume ellipsoid is called the John ellipsoid of $K$. Whenever one takes, for any convex body $K\subseteq\R^n$ containing the origin, $f=\chi_K$ in the aforementioned maximization the solution is $t_0=1$ and $\elip_0$ is the John ellipsoid of $K$.
For other functional version of the problem, see \cite{IN}.


Recall that if $f(x)=e^{-u(x)}$ with $u:\R^n\to[0,+\infty]$ a convex function, its polar function $f^\circ$ is defined as $f^\circ(x)=e^{-u^*(x)}$, where $u^*:\R^n\to\R$ is the Legendre transform of $u$ given by $u^*(x)=\sup_{y\in\R^n}(\langle x,y\rangle-u(y))$.  If $K\subseteq\R^n$ is a convex body containing the origin in its interior, then $(\chi_K)^\circ=e^{-\Vert\cdot\Vert_{K^\circ}}$. Here, for any convex body $K$ containing the origin, $\|\cdot\|_{K}$ denotes the Minkowski gauge
$$
\Vert x\Vert_K=\inf\{\lambda>0\,:\,x\in\lambda K\},
$$
and $K^\circ$ denotes the polar body of $K$, defined by $$K^\circ=\{x\in\R^n\,:\,\langle x,y\rangle\leq1,\,\forall y\in K\}.$$ Besides, we will denote by
$|x|=\|x\|_{B^n_2}$ the Euclidean norm, for every $x\in\mathbb R^n$, where $B^n_2$ denotes the Euclidean unit ball and the epigraph of a convex function $u:\R^n\to[0,+\infty)$ by
$$
\textrm{epi}(u):=\{(x,t)\in\R^n\times[0,+\infty)\,:\,u(x)\leq t\}.
$$

Using this functional notion of polarity \cite{km}, taking into account that $f^{\circ\circ}=f$ for any log-concave function $f$, and the equality $|\elip||\elip^\circ|=|B_2^n|^2$  for any origin centered ellipsoid $\elip$, whenever $f$ is even the previous result can be stated as a minimizing volume problem,
i.e., for any even integrable log-concave function $f:\R^n\to[0,+\infty)$ with $\Vert f\Vert_\infty=f(0)=1$ there exists a unique pair $(t_1,\elip_1)$ with $t_1\ge1$  and $\elip_1\subset\R^n$ a origin centered ellipsoid such that
\begin{equation}\label{ineq:func1}
f\le t_1\exp\{-\|\cdot\|_{\elip_1}\}
\end{equation}
minimizing
$$
\int t_1\exp\{-\|x\|_{\elip_1}\}\,dx=t_1n!|\elip_1|
$$
among all the pairs $(t,\elip)$ verifying \eqref{ineq:func1}.

In \cite{LSW} the authors provided a definition of a functional L\"owner ellipsoid also whenever $f$ is not necessarily even. They considered the corresponding integral minimization problem related to such functional ellipsoid.
In 
that case the solution does not necessarily coincide with the polar of the functional John ellipsoid  of the polar function.
This result generalizes the dual version of John's Theorem, which states that for any convex body $K\subseteq\R^n$ there exists a unique ellipsoid, known as  the L\"owner ellipsoid of $K$, of minimal volume containing $K$. Whenever one takes, for any convex body $K$ containing the origin, $f=\exp\{-\|\cdot\|_{K}\}$, the solution of the minimization problem appearing in \cite{LSW} also recovers the L\"owner's ellipsoid of $K$. 
Interpreting and proving functional versions for log-concave functions of well-known geometric results has become increasingly popular in the last years, see for instance \cite{ABG}, \cite{agjv}, \cite{AGJV2}, \cite{AFS}, \cite{AKM}, \cite{AKSW}, \cite{BL}, \cite{CW}, \cite{C}, \cite{CF}, \cite{FM}, \cite{FZ}, \cite{km}, \cite{Li}, \cite{Ro}.

John and L\"owner ellipsoids of convex bodies have been widely investigated in the literature (see, for example, \cite{GLMP}, \cite{GS}, \cite{GPT}, \cite{He}). Furthermore,
 the John or the L\"owner ellipsoid of a convex body $K\subseteq\R^n$ is characterized by the existence of some contact points between the boundary of $K$ and the Euclidean sphere, $S^{n-1}$ (see \cite{B}, \cite{BR}).

Other connections between convex bodies and  ellipsoids can be found in the literature. For instance, the Legendre and Binet ellipsoids are well-known concepts from classical mechanics. For some references, see
\cite{Le},
\cite{LiM},
\cite{MPa1}, 
\cite{MPa2}, and  \cite{LYZ} for recent developments.

On the other hand, Laserre \cite{lasserre} generalized the definition of the L\"owner ellipsoid   for any compact (non-necessarily convex) set by means of
replacing  the bilinear form given by an ellipsoid  by a homogeneous polynomial of even degree $d\ge2$.

More precisely, if we denote by $\H_d(\R^n)$ the vector space of homogeneous polynomials of degree $d$ in $\mathbb R^n$, of dimension $h_d(n)={n+d-1\choose d}$, it was proved that, given any compact set $K\subset\R^n$ with non-empty interior and an even integer $d\in\N$,
there exists a unique homogeneous polynomial $g_0\in \H_d(\R^n)$ of degree $d$, the \emph{d-Lasserre-L\"owner polynomial},
such that
\begin{equation}\label{ineq:set}
K
\subseteq
G_1(g_0)=\{x\in\R^n: g_0(x)\le 1\}
\end{equation}
with minimum volume $|G_1(g_0)|$ among all $d$-homogeneous polynomial verifying \eqref{ineq:set}.

Let $\F_d(\R^n)$ be the set in $\H_d(\R^n)$ of all $d$-homogeneous polynomials in $\R^n$ such that $|G_1(g)|<+\infty$.
Note that $|G_1(g)|<+\infty$ implies $g\ge0$. In particular, the previous minimization problem cannot be stated for odd $d$.

Moreover, the solution is also characterized in terms of some common contact points in the boundaries of $K$ and $G_1(g_0)$ (cf.~\cite{lasserre}). More precisely,
 $|G_1(g_0)|$ is minimum among all $g\in\H_d(\R^n)$ verifying \eqref{ineq:set} if and only if
there exist $y_1,\dots,y_s\in K$, $\lambda_1,\dots,\lambda_s>0$, with
$s\leq h_d(n)$, such that $g_0(y_i)=1$ for $i=1,\dots,s$, and
\begin{equation}\label{eq:set_touching}
\int_{\mathbb R^n}x^\alpha e^{-g_0(x)}dx=\sum_{i=1}^s\lambda_iy_i^\alpha
\end{equation}
for every $\alpha\in\N^n$ such that $|\alpha|=\sum_{i=1}^n\alpha_i=d$, where $x^\alpha= x_1^{\alpha_1} \cdots x_n^{\alpha_n}$.
Note that the identity above implies a trace identity (see Lemma \ref{identity_r})
$$
\frac{n}{d}\int_{\mathbb R^n}e^{-g_0(x)}dx=\int_{\mathbb R^n}g_0(x)e^{-g_0(x)}dx=\sum_{i=1}^s\lambda_i.
$$

In this paper we will extend the result of \cite{lasserre} to the functional setting.
Let us pose the following problem:
\begin{problem}
\label{problem1}
Given $f:\R^n\to[0,+\infty)$   with $\|f\|_\infty=f(0)=1$, and $d\in\N$ even,
minimize
$$
\int_{\mathbb R^n} te^{-g(x)^{\frac{1}{d}}}\,dx=tn!|G_1(g)|
$$
among all $g\in\H_d(\R^n)$  and  $t\geq 1$   such that
\begin{equation}
\label{ineq:func_poly1}
f(x)\leq te^{-g(x)^{\frac{1}{d}}}.
\end{equation}
\end{problem}

Note that the functional to be optimized verifies a strong global convexity property
on the space of pairs $(r,g)$ verifying \eqref{ineq:func_poly1},
once the natural reparametrization $t=e^r$, together with an appropriate change in the integral to consider, is taken
(see Lemma \ref{lem:IntegralOfg} and Lemma \ref{lemma:global_convex}).
Despite this global property, the set of pairs $(r,g)$ verifying \eqref{ineq:func_poly1}, with $t=e^r$, does not
verify a suitable convexity or compactness property, so the existence and uniqueness of a minimizing pair is not straightforwardly obtained.
Considering, for instance, $f(x)=\chi_{B_2^n}$ and taking $r_0,r_1>0$ and polynomials of the form $g_i(x)=r_i^d|x|^{d}$, $i=0,1$, we have
\[
\chi_{B^n_2} \leq \exp\{r_i-g_{i}^{1/d}\}
\]
 for $i=0,1$. However, 
\[
\chi_{B^n_2} \nleq \exp\{r_\theta-g_{\theta}^{1/d}\}
\]
for any $\theta\in(0,1)$, where $r_\theta=(1-\theta)r_0+\theta r_1$ and $g_\theta=(1-\theta)g_0+\theta g_1$.

In view of Lasserre's result, one might think in integrable functions $f:\R^n\rightarrow[0,+\infty)$
as the typical extension of compact sets to spaces of functions. Unfortunately, \textit{Problem \ref{problem1}}
does not make sense in such a general case (see Example \ref{example:no_integ}). Motivated by this fact, we solve the problem in the setting of
 log-concave integrable functions. Let   $\mathcal F(\mathbb R^n)$ be the set of all log-concave integrable functions on $\mathbb R^n$.

\begin{thm}\label{thm:prob1}
Let $f\in\mathcal F(\mathbb R^n)$ with $\Vert f\Vert_\infty=f(0)=1$ and $d\in\mathbb N$ even.
Then there exists $(t_1,g_1)\in[1,+\infty)\times\F_d(\R^n)$ a solution of  \textit{Problem \ref{problem1}}.
\end{thm}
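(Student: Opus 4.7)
The plan is to use the direct method of the calculus of variations in the finite-dimensional space $\H_d(\R^n)$. First, the infimum $m$ of the objective is finite: since $f$ is integrable and log-concave with $f(0)=1$, there exist constants $A,B>0$ with $f(x)\le Ae^{-B|x|}$ for all $x\in\R^n$, so the pair $(\max\{1,A\},\,B^d|\cdot|^d)$ is admissible in \eqref{ineq:func_poly1} and has finite objective value. Fix then a minimizing sequence $(t_k,g_k)\in[1,\infty)\times\F_d(\R^n)$.

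The core of the proof is an a priori compactness estimate. Since $f$ is log-concave with $f(0)=1$, it is continuous at $0$ (from the interior of its support), so there is $r>0$ with $f(x)\ge 1/2$ for $|x|\le r$. The constraint \eqref{ineq:func_poly1} then gives $g_k(x)\le \log^d(2t_k)$ for all $|x|\le r$, and by $d$-homogeneity
$$
M_k:=\max_{u\in S^{n-1}}g_k(u)\ \le\ r^{-d}\log^d(2t_k).
$$
Since $g_k(x)\le M_k|x|^d$, one has $B(0,M_k^{-1/d})\subset G_1(g_k)$, hence $|G_1(g_k)|\ge |B^n_2|\,r^n\log^{-n}(2t_k)$. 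Plugging this into $t_kn!|G_1(g_k)|\le m+1$ yields $t_k/\log^n(2t_k)\le C$, and since $t/\log^n(2t)\to\infty$ as $t\to\infty$, this forces $t_k\le T$ for some $T<\infty$. Substituting back, $M_k$ is uniformly bounded, so the nonnegative polynomials $g_k$ lie in a bounded subset of the finite-dimensional space $\H_d(\R^n)$ (the map $g\mapsto\max_{S^{n-1}}|g|$ is a norm there). Passing to a subsequence, $t_k\to t_*\in[1,T]$ and $g_k\to g_*$ coefficient-wise, hence uniformly on compact subsets of $\R^n$.

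Admissibility and optimality of the limit follow by standard limiting arguments. Clearly $t_*\ge 1$, $g_*\ge 0$, and $f(x)\le t_*e^{-g_*(x)^{1/d}}$ pointwise. The only nontrivial point is to check $g_*\in\F_d(\R^n)$, i.e.\ $|G_1(g_*)|<+\infty$. For this I would invoke Fatou's lemma applied to $\chi_{G_1(g_k)}$: pointwise convergence of $g_k$ to $g_*$ immediately yields $\liminf_k\chi_{G_1(g_k)}(x)\ge \chi_{\{g_*<1\}}(x)$ for every $x$, and since $\{g_*=1\}$ has Lebesgue measure zero (as a level set of a non-constant polynomial, noting that $g_*(0)=0$ by $d$-homogeneity with $d\ge 2$),
$$
|G_1(g_*)|\,=\,|\{g_*<1\}|\,\le\,\liminf_k|G_1(g_k)|\,\le\,m/(n!\,t_*)\,<\,+\infty.
$$
The same inequality gives $t_*\,n!\,|G_1(g_*)|\le m$, while admissibility of $(t_*,g_*)$ forces the reverse inequality, so $(t_*,g_*)$ is the desired solution of Problem \ref{problem1}.

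The main obstacle in this strategy is the compactness step: the admissible set is unbounded and the objective $(t,g)\mapsto tn!|G_1(g)|$ is not coercive in any obvious norm on $[1,\infty)\times\H_d(\R^n)$, so a priori bounds on $(t_k,g_k)$ must be extracted from a nontrivial interplay between the pointwise constraint \eqref{ineq:func_poly1} (which bounds $g_k$ near $0$ in terms of $\log t_k$) and the volume lower bound on $G_1(g_k)$ that this bound implies. Uniqueness is not claimed here and would require the convexity properties of the functional referenced in the discussion following Problem \ref{problem1}.
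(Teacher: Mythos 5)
Your proof is a genuinely different route from the paper's, which reduces the problem to a one-dimensional minimization: for each $t\ge 1$ the paper invokes Lasserre's theorem on the closure of the set $H_t(f)$ to get a unique optimal $g_t$, then studies the function $\phi(t)=t|G_1(g_t)|$, proving it is continuous and that $(\log t)^nv(t)$ is increasing (Lemma~\ref{lem:prop_of_v}), so $\phi$ attains its minimum on $[1,e^n]$. Your direct method with a minimizing sequence in the finite-dimensional space $\H_d(\R^n)$ is more elementary — it does not need Lasserre's existence result as a black box, nor the machinery of $H_t(f)$ and the monotonicity/convexity estimates for $v(t)$ — and the key a priori bound ($g_k$ small on a small ball $\Rightarrow$ $|G_1(g_k)|$ large $\Rightarrow$ $t_k$ bounded $\Rightarrow$ $g_k$ bounded) is a nice coercivity argument. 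The price is that you obtain less structural information: the paper's proof shows the optimal $t$ lies in $[1,e^n]$ and connects the minimizer to Lasserre's contact-point identities \eqref{eq:contact1}.

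There is one genuine gap, though a small one: the assertion that $f$ is continuous at $0$, hence that $f\ge 1/2$ on some ball $B(0,r)$. This is false in general. The hypothesis $\|f\|_\infty=f(0)=1$ does not put $0$ in the interior of $\mathrm{supp}\,f$; e.g.\ $f(x,y)=e^{-x-y^2}\chi_{\{x\ge0\}}$ satisfies all the assumptions but is discontinuous at the origin, so no such $r$ exists and your bound on $M_k$ is vacuous. The fix is easy and preserves the whole structure: since $\int f>0$ and $f$ is log-concave, some super-level set $\{f\ge c\}$ has nonempty interior, so pick a ball $B(x_0,r)\subset\{f\ge c\}$ (with $x_0$ possibly away from the origin). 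The constraint gives $g_k\le L_k:=(\log(t_k/c))^d$ on $B(x_0,r)$; by $d$-homogeneity $L_k^{-1/d}B(x_0,r)\subset G_1(g_k)$, so $|G_1(g_k)|\ge|B^n_2|r^nL_k^{-n/d}$, and the rest of your coercivity argument runs unchanged to bound $t_k$ and then $L_k$. Finally, $g\mapsto\sup_{B(x_0,r)}|g|$ is still a norm on $\H_d(\R^n)$ (a polynomial vanishing on an open set is zero), so boundedness of $g_k$ on $B(x_0,r)$ gives boundedness of the coefficients and the compactness you need. With this change the remainder of your argument — pointwise passage to the limit, Fatou applied to $\chi_{G_1(g_k)}$, the measure-zero observation for $\{g_*=1\}$, and the optimality conclusion — is correct.
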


Notice that the problem considered in Theorem \ref{thm:prob1} was solved with uniqueness for $d=2$  in \cite{agjv}, in the even case, and in \cite{LSW}, in the general case,
since for $g\in\H_2(\mathbb R^n)$  the set $G_1(g)$ is an ellipsoid provided that $|G_1(g)|$ is finite (see Lemma \ref{F_d} (\ref{prop:ellip})).

However, even this case is not solved with uniqueness in the proof of Theorem \ref{thm:prob1} with this point of view, since polarity does not work clearly between polynomials.
More precisely, if we try to construct a proof by taking duals in the proof in \cite{agjv}, we would need to take the polar of the ellipsoid $G_1(g)$, but the expression of the polynomial defining the polar ellipsoid in terms of $g$ is not clear.

In the general case,
the uniqueness is not straightforwardly obtained (although we do not know any example for which the minimization point is not unique). It seems to us that the proof would require some more convexity properties than the ones we have obtained.


The following similar problem is also posed. Unlike the case of Problem \ref{problem1} above, we are able to show
existence and uniqueness of the solution.

 \begin{problem}
\label{problem2}
Given $f:\R^n\to[0,+\infty)$   with $\|f\|_\infty=f(0)=1$, and $d\in\N$ even,
minimize
$$
\int_{\mathbb R^n} te^{-g(x)}\,dx=t\Gamma(\tfrac{n}{d}+1)|G_1(g)|
$$
among all $g\in\H_d(\R^n)$  and  $t\geq 1$   such that
\begin{equation}
\label{ineq:func_poly2}
f(x)\leq te^{-g(x)}.
\end{equation}
\end{problem}

Again,the existence of a global minimum is not guaranteed using the convexity of the functional to be optimized, since
the argument would need the feasible set of solutions to be a convex compact set. Lemma \ref{lem:log-convex-varphi}
proves that the set is convex, but compactness can not be assured.

This problem is solved with uniqueness, when imposing an extra condition, in the following result.

\begin{thm}\label{thm:prob2}
Let $f:\R^n\to[0,+\infty)$  be a log-concave function  with $\|f\|_\infty=f(0)=1$ and $d\in\mathbb N$ even,
such that
$$
\widehat H_1(f)
=\bigcup_{\lambda\in(0,1)}\log(1/\lambda)^{-1/d}\{x\in\R^n: f(x)\ge \lambda\}
$$
is bounded.
Then
there exists $(t_2,g_2)\in[1,+\infty)\times\F_d(\R^n)$ a unique solution of \textit{Problem \ref{problem2}}.
\end{thm}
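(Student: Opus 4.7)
The plan is to establish existence by compactness of a minimizing sequence in the finite-dimensional space $\H_d(\R^n)$, and uniqueness by a strict log-convexity argument. Writing $u=-\log f$, the constraint $f\le te^{-g}$ becomes $g(x)\le u(x)+\log t$ for every $x\in\R^n$; the normalization $f(0)=1$ forces $t\ge 1$, and the boundedness of $\widehat H_1(f)$ yields some $R>0$ with $u(x)\ge|x|^d/R^d$, so the pair $(1,R^{-d}|\cdot|^d)$ is feasible and the infimum $M$ of the functional $\Phi(t,g):=t\,\Gamma(\tfrac{n}{d}+1)|G_1(g)|$ is finite.

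For uniqueness I exploit a ``geometric--arithmetic'' interpolation. Given feasible $(t_i,g_i)$, $i=1,2$, and $\alpha,\beta>0$ with $\alpha+\beta=1$, a convex combination of the two constraint inequalities yields $\alpha g_1+\beta g_2\le u+\log(t_1^\alpha t_2^\beta)$; since $\alpha g_1+\beta g_2\in\H_d(\R^n)$ is nonnegative and satisfies $(\alpha g_1+\beta g_2)^{-n/d}\le\alpha^{-n/d}g_1^{-n/d}$, it lies in $\F_d(\R^n)$, so the interpolated pair is feasible. H\"older's inequality then gives
$$
\Phi(t_1^\alpha t_2^\beta,\alpha g_1+\beta g_2)\le\Phi(t_1,g_1)^\alpha\Phi(t_2,g_2)^\beta,
$$
with equality iff $g_1-g_2$ is constant; since $g_1-g_2\in\H_d(\R^n)$ with $d\ge 2$, this forces $g_1=g_2$, which among minimizers immediately forces $t_1=t_2$ as well.

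For existence, take a minimizing sequence $(t_k,g_k)$. Continuity of $u$ at the origin produces an $r_0>0$ with $u\le 1$ on $r_0B^n_2$; the constraint then yields $g_k\le 1+\log t_k$ on $r_0B^n_2$, which $d$-homogeneity upgrades to $\max_{S^{n-1}}g_k\le r_0^{-d}(1+\log t_k)$. Combining this with the polar identity $|G_1(g)|=\tfrac{1}{n}\int_{S^{n-1}}g(\theta)^{-n/d}d\theta\ge|B^n_2|(\max_{S^{n-1}}g)^{-n/d}$ produces a lower bound of the form $\Phi(t_k,g_k)\ge c\,t_k(1+\log t_k)^{-n/d}$, forcing $\{t_k\}$ to remain bounded. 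Hence the coefficients of $g_k$ are bounded, a subsequence converges to some $(t_2,g_2)\in[1,+\infty)\times\H_d(\R^n)$, pointwise limits preserve $f\le t_2e^{-g_2}$, and Fatou's lemma applied to $g_k^{-n/d}$ on $S^{n-1}$ gives $|G_1(g_2)|\le\liminf_k|G_1(g_k)|<+\infty$, so $g_2\in\F_d(\R^n)$ and $\Phi(t_2,g_2)\le M$. I expect the hardest step to be exactly this coercivity bound ruling out $t_k\to+\infty$, since the feasible set is not compact; once it is secured, finite-dimensionality of $\H_d(\R^n)$ together with Fatou's lemma closes the argument, and uniqueness follows cleanly from the strict H\"older inequality applied to the polynomial interpolation.
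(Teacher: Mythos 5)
Your proposal is correct in outline and takes a genuinely different route from the paper. The paper reduces to a one-dimensional problem: for each fixed $t\ge 1$ it invokes Lasserre's theorem to produce the unique optimal $\widehat g_t$, then studies $\widehat\phi(t)=t|G_1(\widehat g_t)|$ via the monotonicity of $(\log t)^{n/d}\widehat v(t)$ (Lemma \ref{lem:propofv2}) and the log-convexity of $\widehat\phi(e^r)$ (Lemma \ref{lem:log-convex-varphi}); the delicate point there is continuity of $\widehat v$ at $t=1$, which requires the contact-point analysis and Proposition \ref{prop:injective}. You instead run a direct compactness argument in the finite-dimensional space $[1,\infty)\times\H_d(\R^n)$, with a coercivity bound to trap $t_k$, Fatou to close the lower semicontinuity, and the same strict H\"older/log-convexity idea as Lemma \ref{lemma:global_convex} for uniqueness. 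Your approach avoids the boundary continuity of $\widehat v$ entirely and does not need Lasserre's theorem as a black box, which is a real simplification; what it does not deliver for free is the contact-point description of the optimizer, which the paper's per-$t$ use of Lasserre makes transparent.

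There is one genuine gap: the sentence ``Continuity of $u$ at the origin produces an $r_0>0$ with $u\le 1$ on $r_0B^n_2$'' is not justified. The hypothesis $\|f\|_\infty=f(0)=1$ only places $0$ in $\dom u$, not in its interior, and boundedness of $\widehat H_1(f)$ does not force $0\in\interior(\dom u)$. For instance $f(x,y)=e^{-x^2-y^2}\chi_{\{y\ge0\}}$ with $d=2$ has $\widehat H_1(f)$ equal to the closed upper unit half-disc (bounded), yet $u$ is $+\infty$ on $\{y<0\}$, so no ball centered at $0$ works. The fix is mild: assume $\int f>0$ (as the paper does implicitly in the proof of Theorem \ref{thm:prob1}), so $\dom u$ is a convex set with nonempty interior; pick a closed ball $B\subset\interior(\dom u)$, on which the convex function $u$ is bounded above by some $M$. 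Then the feasibility constraint together with $g_k\ge 0$ (forced by $g_k\in\F_d(\R^n)$) gives $0\le g_k\le M+\log t_k$ on $B$, and equivalence of norms on the finite-dimensional space $\H_d(\R^n)$ yields $\max_{S^{n-1}}g_k\le C(M+\log t_k)$, from which your coercivity estimate $\Phi(t_k,g_k)\ge c\,t_k(M+\log t_k)^{-n/d}$ and the subsequent compactness argument go through unchanged. With this repair, your proof is complete.
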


For an interior minimization point  $(t_2,g_2)\in(1,+\infty)\times\textrm{int}\F_d(\R^n)$, being the
unique solution of \textit{Problem \ref{problem2}} can be characterized by some \textit{touching conditions}, via the Karush-Kuhn-Tucker conditions (see \cite{APE}, \cite{hiriart}). For these conditions to hold, no hypothesis on the log-concavity of $f$ is needed.

\begin{thm}\label{thm:touching2}
Let $f:\R^n\to[0,+\infty)$  be a bounded function  with  $\|f\|_\infty=f(0)=1$.
Moreover, let
$(t_2,g_2)\in(1,+\infty)\times\mathrm{int}(\F_d(\R^n))$
 be such that
$f(x)\leq t_2\exp(-g_2(x))$ for every $x\in\mathbb R^n$.
Then the following are equivalent:
\begin{enumerate}[(i)]
\item $(t_2,g_2)$ is the only solution of \textit{Problem \ref{problem2}}.
\item There exist $x_1,\dots,x_m\in\mathbb R^n$, $m\leq {n+d-1\choose d}+1$, with
$f(x_i)=t_2\exp(-g_2(x_i))$, and $\lambda_i>0$, $1\leq i\leq m$, such that
\begin{align*}
t_2\int_{\R^n}\exp(-g_2(x))dx&=\sum_{i=1}^m\lambda_i \text{  and}
\\
t_2\int_{\R^n}x^\alpha\exp(-g_2(x))dx&=\sum_{i=1}^m\lambda_ix_i^\alpha\
\text{  for all  }
\alpha\in\N^n_d.
\end{align*}
\end{enumerate}
\end{thm}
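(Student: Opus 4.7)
The plan is to read \emph{Problem \ref{problem2}} as a convex semi-infinite program after the change of variables $s=\log t\ge 0$. The objective becomes $e^s\int_{\R^n}e^{-g(x)}\,dx$, whose logarithm $s+\log\int e^{-g}$ is jointly convex in $(s,g)$ by the log-convexity of $g\mapsto\int e^{-g}$ (Hölder's inequality, which is the content of Lemma \ref{lem:IntegralOfg} and Lemma \ref{lemma:global_convex}), and each constraint $f(x)\le te^{-g(x)}$ rewrites as the affine inequality $g(x)-s\le-\log f(x)$, one for each $x$ with $f(x)>0$. Since $(t_2,g_2)\in(1,+\infty)\times\mathrm{int}\,\F_d(\R^n)$, the side constraints $t\ge 1$ and $g\in\F_d(\R^n)$ are inactive, so the only binding conditions at the optimum are these pointwise inequalities.

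The forward direction $(i)\Rightarrow(ii)$ is the main step. I would invoke the Karush--Kuhn--Tucker theorem for semi-infinite convex programming: strict feasibility of $(t_2,g_2)$ supplies the constraint qualification, and optimality yields a non-negative Radon measure $\mu$ on $\R^n$, supported by complementary slackness on the contact set $\{x:f(x)=t_2e^{-g_2(x)}\}$, such that stationarity in $s$ and in $g$ reads
\begin{align*}
\int_{\R^n}d\mu(x)&=t_2\int_{\R^n}e^{-g_2(x)}\,dx,\\
\int_{\R^n}\phi(x)\,d\mu(x)&=t_2\int_{\R^n}\phi(x)e^{-g_2(x)}\,dx\qquad\text{for every }\phi\in\H_d(\R^n).
\end{align*}
These are $h_d(n)+1$ linear conditions on $\mu$. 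The conic form of Carathéodory's theorem (equivalently, a Tchakaloff-type cubature statement) then replaces $\mu$ by a discrete measure $\sum_{i=1}^m\lambda_i\delta_{x_i}$, $\lambda_i>0$ and $x_i$ in the contact set, with $m\le h_d(n)+1=\binom{n+d-1}{d}+1$, which is exactly (ii). The main obstacle is to execute this KKT step cleanly over the non-compact domain $\R^n$; a natural workaround is to restrict a priori to a sufficiently large Euclidean ball, outside of which the strict feasibility of $(t_2,g_2)$ prevents any contact, and then apply the finite-dimensional Carathéodory argument.

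For $(ii)\Rightarrow(i)$, let $(t,g)$ be any feasible pair (we may assume $g\in\F_d(\R^n)$, otherwise the objective is infinite). Convexity of $g\mapsto\log\int e^{-g}$ gives the tangent-line inequality
$$
\log\int_{\R^n}e^{-g(x)}\,dx\;\ge\;\log\int_{\R^n}e^{-g_2(x)}\,dx+\frac{1}{\int e^{-g_2}}\int_{\R^n}\bigl(g_2(x)-g(x)\bigr)e^{-g_2(x)}\,dx.
$$
Since $g_2-g\in\H_d(\R^n)$, the moment identities in (ii) yield
$$\int_{\R^n}(g_2-g)e^{-g_2}\,dx=t_2^{-1}\sum_{i=1}^m\lambda_i\bigl(g_2(x_i)-g(x_i)\bigr).$$
At each contact point $g_2(x_i)=\log t_2-\log f(x_i)$, while $f(x_i)\le te^{-g(x_i)}$ forces $g(x_i)\le\log t-\log f(x_i)$, so $g_2(x_i)-g(x_i)\ge\log(t_2/t)$. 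Combining with $\sum_i\lambda_i=t_2\int e^{-g_2}$ collapses the inequality to $\log t+\log\int e^{-g}\ge\log t_2+\log\int e^{-g_2}$, i.e., $t\int e^{-g}\ge t_2\int e^{-g_2}$. Uniqueness follows from strictness of the underlying Hölder inequality: equality in the tangent estimate forces $g-g_2$ to be a constant, hence identically zero as a homogeneous polynomial of positive degree, after which the contact inequalities force $t=t_2$.
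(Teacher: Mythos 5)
Your argument is essentially sound and splits naturally into two halves. For $(i)\Rightarrow(ii)$ you take the same route as the paper: cast \textit{Problem~\ref{problem2}} as a convex program over the intersection of halfspaces indexed by the support of $f$, apply KKT at the interior point $(r_2,g_2)=(\log t_2,g_2)$, and discretize the multiplier via the conic form of Carathéodory's theorem. The paper phrases the KKT step through the normal cone $N_C(r_2,g_2)=\mathrm{pos}\{(-1,(x^\alpha)_\alpha):x\in S_f^*\}$ and $-\nabla W\in N_C$, whereas you phrase it via a Radon-measure multiplier and a Tchakaloff-type reduction; these are two formulations of the same fact. Both versions share the same unaddressed technicality: for an infinite intersection of halfspaces the outer normal cone at a boundary point is in general only the \emph{closure} of the positive hull of the active normals, and neither the paper nor your workaround (truncating to a large ball, which is not obviously justified since the contact set $\{x:f(x)=t_2 e^{-g_2(x)}\}$ need not be bounded under the stated hypotheses) dispatches this cleanly. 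Since the paper accepts this at the same level of rigor, I do not regard it as a gap specific to your proposal.

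Where you genuinely diverge from the paper is in $(ii)\Rightarrow(i)$. The paper simply invokes KKT sufficiency for convex problems ("Karush--Kuhn--Tucker conditions imply that $(r_2,g_2)$ is an extreme point, and thus \ldots a local minimization point"). You instead give a self-contained calculation: the tangent-plane bound $\log\int e^{-g}\ge\log\int e^{-g_2}+\frac{1}{\int e^{-g_2}}\int(g_2-g)e^{-g_2}$ from convexity of the log-partition function, the moment identities converting $\int(g_2-g)e^{-g_2}$ into $t_2^{-1}\sum_i\lambda_i(g_2(x_i)-g(x_i))$, the contact condition $g_2(x_i)=\log t_2-\log f(x_i)$ together with feasibility $g(x_i)\le\log t-\log f(x_i)$ yielding $g_2(x_i)-g(x_i)\ge\log(t_2/t)$, and the normalization $\sum_i\lambda_i=t_2\int e^{-g_2}$ collapsing everything to $t\int e^{-g}\ge t_2\int e^{-g_2}$. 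The uniqueness step is also correct: equality in the tangent bound forces equality in Hölder, hence $e^{-g}=c e^{-g_2}$ and, since $g,g_2$ are homogeneous of positive degree, $g=g_2$, after which $t=t_2$ follows. This is a more transparent and elementary argument than the paper's, and it makes explicit the mechanism by which the touching conditions certify global optimality rather than relegating it to a black-box citation.
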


The paper is organized as follows. In Section \ref{sec:TechnicalResults} we provide all the definitions and properties related to homogeneous polynomials which are needed for the study of both problems. Section \ref{section:prob1} is devoted to give the existence of a minimization point in \textit{Problem \ref{problem1}}. In Section \ref{section:prob2}
we study \textit{Problem \ref{problem2}}, giving similar results as the ones given in Section \ref{section:prob1}, and new facts that allow to prove the existence and uniqueness of the minimization problem, under the  additional assumption given in Theorem \ref{thm:prob2}. Further, we give the characterization of the minimization point in terms of the contact points.
Finally in Section \ref{sec:Application_OVR_d_oir_d} we introduce the $d$-outer volume and integral ratio of a 
convex body, and show an application of the $d$-L\"owner-Lasserre polynomial to approximation of convex bodies.

\section{Homogeneous polynomials}\label{sec:TechnicalResults}

Let $\H_d(\R^n)$
be the vector space of homogeneous polynomials of degree $d$ in $\mathbb R^n$, with dimension $h_d(n)={n+d-1\choose d}$. Any $g\in \H_d(\R^n)$ can be uniquely written as
$$
g(x)=\sum_{\alpha\in\N_d^n} g_\alpha x^\alpha
$$
where $\N_d^n=\{\alpha=(\alpha_1,\dots,\alpha_n)\in\N^n: |\alpha|=\alpha_1+\cdots+\alpha_n=d\}$ and for $x=(x_1,\dots,x_n)\in\R^n$ and $\alpha\in\N_d^n$, $x^\alpha$ denotes the monomial $\displaystyle{x^\alpha=\prod_{i=1}^n x_i^{\alpha_i}}$.

For any $g\in\H_d(\R^n)$, let us denote, for any $t>0$, $G_t(g)=\{x\in\R^n: g(x)\le t\}$. Notice that by the homogeneity of $g$, $G_t(g)=t^\frac{1}{d}G_1(g)$, and that if $|G_1(g)|<+\infty$ then necessarily $g$ must be non-negative on $\R^n$ and therefore $d$ must be even. Moreover, if $d=2$, $G_1(g)$ is an ellipsoid. However, for $d>2$, $G_1(g)$ can be non-convex, and even unbounded, as the example $g(x,y)=x^2 y^2(x^2+y^2)$ shows (see \cite{lasserre} and Lemma \ref{F_d} below).

Let $\F_d(\R^n)$ be the set in $\H_d(\R^n)$ of all $d$-homogeneous polynomials in $\R^n$ such that $|G_1(g)|<+\infty$.

 We first show the identities involving the integrals in \textit{Problem \ref{problem1}} and \textit{Problem \ref{problem2}} and the volume $|G_1(g)|$. They are particular cases of the following technical result (a particular case is given in~\cite[Thm.~2.2]{lasserre}).

\begin{lemma} \label{identity_r} Let $n\ge1$, $k\ge0$, $d\ge2$ even, $r\in\R$ and $m>0$ be such that
$\frac{n+k}{d}+r>0$. For $\alpha\in\N_k^n$, let $g\in\mathbb H_d(\R^n)$ be such that $x^\alpha$ is integrable in $G_1(g)$. Then
$$
\int_{\R^n} x^\alpha g(x)^r\exp(-g(x)^{1/m})\,dx
=
\tfrac{n+k}{d}m
\Gamma\left(m\left(\tfrac{n+k}{d}+r\right)\right)
\int_{G_1(g)} x^\alpha \,dx.
$$
In particular,
$$
\int_{\R^n}  g(x)\exp(-g(x))\,dx
=
\tfrac{n}{d}
\Gamma\left(\tfrac{n}{d}+1\right)
|G_1(g)|.
$$
\end{lemma}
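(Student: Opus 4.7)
The plan is to reduce the $n$-dimensional integral to a one-dimensional one by combining the homogeneity of $g$ (of degree $d$) and of $x^\alpha$ (of degree $k$) with a layer-cake / pushforward identity, and then to evaluate the resulting one-dimensional integral via the substitution $u=t^{1/m}$, which produces a Gamma function.

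First I would define, for $t\ge 0$,
\[
F(t):=\int_{G_t(g)} x^\alpha\,dx.
\]
Since $x^\alpha$ may change sign, I would split it as $x^\alpha=(x^\alpha)^+-(x^\alpha)^-$ and consider $F^\pm(t):=\int_{G_t(g)}(x^\alpha)^\pm\,dx$ separately; both are non-decreasing and finite by the hypothesis that $x^\alpha$ is integrable on $G_1(g)$. Each $(x^\alpha)^\pm$ is positively $k$-homogeneous, and the $d$-homogeneity of $g$ gives $G_t(g)=t^{1/d}G_1(g)$, so the change of variables $x=t^{1/d}y$ yields
\[
F^\pm(t)=t^{(n+k)/d}F^\pm(1),\qquad F(t)=t^{(n+k)/d}\int_{G_1(g)}x^\alpha\,dx.
\]

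Next I would apply the pushforward identity: for every Borel $\varphi\colon[0,\infty)\to[0,\infty)$,
\[
\int_{\R^n}(x^\alpha)^\pm\,\varphi(g(x))\,dx=\int_0^\infty \varphi(t)\,dF^\pm(t),
\]
and, by the finiteness of both $F^\pm(1)$, subtraction gives
\[
\int_{\R^n}x^\alpha\,\varphi(g(x))\,dx=\frac{n+k}{d}\left(\int_{G_1(g)}x^\alpha\,dx\right)\int_0^\infty \varphi(t)\,t^{(n+k)/d-1}\,dt.
\]
Taking $\varphi(t)=t^r\exp(-t^{1/m})$ and substituting $u=t^{1/m}$ transforms the remaining one-dimensional integral into
\[
\int_0^\infty t^{(n+k)/d+r-1}e^{-t^{1/m}}\,dt=m\int_0^\infty u^{m((n+k)/d+r)-1}e^{-u}\,du=m\,\Gamma\!\left(m\!\left(\tfrac{n+k}{d}+r\right)\right),
\]
with convergence at $u=0$ secured by the assumption $(n+k)/d+r>0$. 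Combining the factors yields the asserted identity, and the particular case is immediate upon setting $\alpha=0$, $r=1$, and $m=1$.

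I do not anticipate a serious obstacle. The only subtlety is the possible sign change of $x^\alpha$, dealt with by splitting into positive and negative parts, each of which is positively $k$-homogeneous, so the scaling identity for $F^\pm$ survives verbatim. The remaining work is a substitution and some bookkeeping to match the normalizations.
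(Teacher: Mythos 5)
Your argument is correct and is essentially the paper's proof reorganized. Where the paper writes $g(x)^r e^{-g(x)^{1/m}}$ as an explicit integral $\int_{g(x)^{1/m}}^\infty (y-mr)y^{mr-1}e^{-y}\,dy$ and then applies Fubini to the resulting double integral, you invoke the pushforward/Stieltjes form of the same reduction and differentiate the homogeneity relation $F(t)=t^{(n+k)/d}F(1)$ directly; the remaining one-dimensional substitution producing $m\,\Gamma\bigl(m\bigl(\tfrac{n+k}{d}+r\bigr)\bigr)$ and the particular case $\alpha=0$, $r=1$, $m=1$ are identical. Your splitting of $x^\alpha$ into $(x^\alpha)^\pm$ is a small improvement in rigor over the paper, which works directly with the signed quantity $w_\alpha(y)$; otherwise the two arguments are the same.
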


\begin{proof}
Let us define, for any $y>0$, $w_\alpha(y)=\displaystyle\int_{\{x:g(x)\le y\}} x^\alpha dx$. By the homogeneity of $g$ we have that
$
w_\alpha(y)
=y^{\frac{n+k}{d}}w_\alpha(1).
$
Therefore
\begin{align*}
\int_{\R^n} x^\alpha g(x)^r&\exp(-g(x)^{1/m})\,dx
=
\int_{\R^n} x^\alpha \int_{g(x)^{1/m}}^{+\infty} (y-mr)y^{mr-1}e^{-y}\,dy\,dx
\\
&=
\int_0^{+\infty} (y-mr)y^{mr-1}e^{-y}
\int_{\{x:g(x)\le y^m\}} x^\alpha dx\,dy
\\
&=
w_\alpha(1) \int_0^{+\infty} (y-mr)y^{m(\frac{n+k}{d}+r)-1} e^{-y}\,dy
\\
&=
\frac{ n+k}{d}m
\Gamma\left(m\left(\tfrac{n+k}{d}+r\right)\right)w_\alpha(1).\qedhere
\end{align*}
\end{proof}

\begin{lemma}\label{lem:IntegralOfg}
Let $g\in\F_d(\R^n)$. For every $t\ge 0$, and $m>0$,
$$
|G_t(g)|
=
\frac{t^{n/d}}{\Gamma\left(\frac{n m}{d}+1\right)}\int_{\R^n}\exp(-g(x)^{1/m})\,dx.
$$
In particular,
$$
|G_1(g)|
=
\frac{1}{\Gamma\left(\frac{n }{d}+1\right)}\int_{\R^n}\exp(-g(x))\,dx
=
\frac{1}{n!}\int_{\R^n}\exp(-g(x)^{1/d})\,dx.
$$
\end{lemma}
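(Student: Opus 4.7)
The plan is to derive this as a direct corollary of Lemma \ref{identity_r}, plus the homogeneity scaling of the sublevel sets. First I would observe that since $g$ is $d$-homogeneous, for every $t\geq 0$ one has $G_t(g)=t^{1/d}G_1(g)$, hence $|G_t(g)|=t^{n/d}|G_1(g)|$. So it suffices to establish the claimed identity at $t=1$.

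Next, I would apply Lemma \ref{identity_r} in the trivial case $k=0$, $\alpha=0$ (so $x^\alpha\equiv 1$), $r=0$. The integrability hypothesis reduces to $|G_1(g)|<+\infty$, which holds by assumption ($g\in\F_d(\R^n)$), and the condition $\frac{n+k}{d}+r=\frac{n}{d}>0$ is automatic. The conclusion of Lemma \ref{identity_r} then reads
$$
\int_{\R^n}\exp(-g(x)^{1/m})\,dx=\tfrac{n}{d}m\,\Gamma\!\left(\tfrac{nm}{d}\right)|G_1(g)|=\Gamma\!\left(\tfrac{nm}{d}+1\right)|G_1(g)|,
$$
using the functional equation $z\Gamma(z)=\Gamma(z+1)$ with $z=\tfrac{nm}{d}$. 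Rearranging gives the formula for $|G_1(g)|$, and combining with the homogeneity scaling yields the stated identity for $|G_t(g)|$.

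Finally, the two particular cases in the \emph{In particular} part follow by specialising $m$: taking $m=1$ gives $|G_1(g)|=\Gamma(\tfrac{n}{d}+1)^{-1}\int_{\R^n}e^{-g(x)}dx$, and taking $m=d$ gives $\Gamma(n+1)=n!$ in the denominator and $g(x)^{1/d}$ in the exponent, producing the second expression.

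I do not expect any real obstacle here; the only subtle point is verifying that the hypotheses of Lemma \ref{identity_r} are satisfied in the degenerate case $k=0$, $\alpha=0$, which amounts to the finiteness of $|G_1(g)|$ built into the definition of $\F_d(\R^n)$, and keeping track of the Gamma-function shift $\tfrac{n}{d}m\,\Gamma(\tfrac{nm}{d})=\Gamma(\tfrac{nm}{d}+1)$.
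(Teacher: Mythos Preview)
Your proposal is correct and follows essentially the same approach as the paper's own proof: both invoke the homogeneity scaling $|G_t(g)|=t^{n/d}|G_1(g)|$, apply Lemma \ref{identity_r} with $k=0$, $\alpha=0$, $r=0$, and then specialise $m=1$ and $m=d$. If anything, you are slightly more explicit about verifying the hypotheses of Lemma \ref{identity_r} and about the Gamma shift $\tfrac{n}{d}m\,\Gamma(\tfrac{nm}{d})=\Gamma(\tfrac{nm}{d}+1)$, which the paper uses without comment.
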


\begin{proof}
By the homogeneity of $g$, $G_t(g)=t^\frac{1}{d}G_1(G)$ and then $|G_t(g)|=t^{n/d}|G_1(g)|<+\infty$ for any $t>0$. Besides, by Lemma \ref{identity_r} with $k=0$,  $\alpha=(0\dots,0)$, and $r=0$, we have that for any $m>0$
$$
\int_{\R^n} \exp(-g(x)^{1/m})\,dx=\Gamma\left(\frac{n m}{d}+1\right)|G_1(g)|.
$$
In particular, taking $m=1$ or $m=d$ we obtain
$$
|G_1(g)|
=
\frac{1}{\Gamma\left(\frac{n }{d}+1\right)}\int_{\R^n}\exp(-g(x))\,dx
=
\frac{1}{n!}\int_{\R^n}\exp(-g(x)^{1/d})\,dx.\qedhere
$$
\end{proof}

The following result states some topological properties of $\F_d(\R^n)$.

\begin{lemma} \label{F_d} Let $d\in\N$ be an even integer.
\begin{enumerate}
\item $\F_d(\R^n)$ is a convex cone in $\H_d(\R^n)$, which is not closed and has non-empty interior.

\item\label{prop:ellip} For $d=2$, $g\in\F_2(\R^n)$ if and only if $G_1(g)$ is bounded (an ellipsoid). Moreover, $\F_2(\R^n)$ is open.

\item For  $d=4$, $n=2$, $g\in\F_4(\R^2)$ if and only if $G_1(g)$ is bounded. Moreover $\F_4(\R^2)$ is open.

\item For  $d=4$, $n\ge3$, there exists $g\in\F_4(\R^n)$ so that $G_1(g)$ is not bounded. Moreover $\F_4(\R^n)$ is not open.

\item For $d\ge6$, $n\ge2$,  there exists $g\in\F_d(\R^n)$ so that $G_1(g)$ is not bounded. Moreover, $g\in\F_d(\R^n)$ is not open.
\end{enumerate}\end{lemma}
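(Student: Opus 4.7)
The unifying technical tool is the polar-coordinate formula for $|G_1(g)|$. For any $g\ge 0$ in $\H_d(\R^n)$, writing $g(x)=|x|^d h(x/|x|)$ with $h=g|_{S^{n-1}}$ continuous and non-negative, Fubini on rays yields
\begin{equation*}
|G_1(g)| = \frac{1}{n}\int_{S^{n-1}} h(\theta)^{-n/d}\,d\sigma(\theta),
\end{equation*}
with the convention that the integrand is $+\infty$ on $\{h=0\}$. Moreover $G_1(g)$ is bounded if and only if $h>0$ everywhere on $S^{n-1}$. Thus membership in $\F_d(\R^n)$ reduces to the spherical integrability of $h^{-n/d}$, and each part of the lemma will be analyzed through this lens.

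For (1), the cone property is homogeneity; convexity follows from the inclusion $G_1(\lambda g_1+\mu g_2)\subseteq G_1(\lambda g_1)=\lambda^{-1/d}G_1(g_1)$, which has finite volume when $g_1\in\F_d(\R^n)$. The interior is non-empty because $g_0(x)=|x|^d\in\F_d(\R^n)$ has $h_0\equiv 1$, and any polynomial close to $g_0$ in coefficients stays uniformly positive on the compact sphere. Non-closedness comes from $\tfrac{1}{k}|x|^d\in\F_d(\R^n)$ tending to the zero polynomial, which is not in $\F_d(\R^n)$. Part (2) is classical: $\H_2(\R^n)$ is the space of symmetric bilinear forms, $|G_1(g)|<\infty$ is equivalent to positive-definiteness, and positive-definite forms are an open cone. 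For (3), any zero $\theta_0$ of $h$ on $S^1$ has even vanishing order $2k\ge 2$ by non-negativity, so $h\sim c(\theta-\theta_0)^{2k}$ gives $h^{-1/2}\sim|\theta-\theta_0|^{-k}$ which is not integrable; hence $g\in\F_4(\R^2)$ forces $h>0$ on $S^1$, which is bounded below by compactness, establishing both the characterization and openness.

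For (4) and (5), the key step is producing a $g\in\F_d(\R^n)$ with unbounded $G_1(g)$. The candidate is
\begin{equation*}
g(x) = \Bigl(\sum_{1\le i<j\le n} x_i^2 x_j^2\Bigr)\,p(x),
\end{equation*}
where $p\in\H_{d-4}(\R^n)$ is any positive-definite form (e.g.\ $p\equiv 1$ for $d=4$ and $p(x)=|x|^{d-4}$ for $d\ge 6$). This $g$ is non-negative, vanishes exactly on the coordinate axes (so $G_1(g)$ is unbounded), and its restriction $h$ has isolated zeros at $\pm e_1,\dots,\pm e_n$; near $e_1$, with $y=(x_2,\dots,x_n)$ as transverse coordinates on the sphere, $h\sim c|y|^2$. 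The local integrability $\int_{|y|<\varepsilon}|y|^{-2n/d}\,dy<\infty$ in $\R^{n-1}$ (by polar coordinates) is equivalent to $d(n-1)>2n$, which is satisfied precisely when $d=4,\ n\ge 3$ or $d\ge 6,\ n\ge 2$---exactly the cases of (4) and (5). For non-openness, such $g$ has $g(x_0)=0$ for some $x_0\ne 0$, and the perturbation $g-\varepsilon q$ with $q\in\H_d(\R^n)$ positive-definite takes negative values at $x_0$, leaving $\F_d(\R^n)$.

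The main obstacle is the local integrability analysis near the zero set of $h$ on $S^{n-1}$, which balances the vanishing order of $h$ against the Jacobian of spherical measure in transverse coordinates. Once organized in this way, the sharp inequality $d(n-1)>2n$ cleanly demarcates the bounded-$G_1$ regime from the unbounded-$G_1$ regime, and correspondingly openness from non-openness of $\F_d(\R^n)$, with the borderline $d(n-1)\le 2n$ recovering exactly the cases treated in (2) and (3).
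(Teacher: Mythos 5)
Your proof is correct but takes a genuinely different and more unified route than the paper's. The paper argues case by case: Sylvester's law of inertia for $d=2$; the canonical form $ax^4+2bx^2y^2+cy^4$ with the explicit condition $a,c>0$, $b>-\sqrt{ac}$ for $d=4$, $n=2$; the example $x^4+y^4+z^4-2\sqrt{2}\,x^2yz$ with a change-of-variables estimate for $d=4$, $n\ge3$; and $(x^2-y^2)^2(x^{d-4}+y^{d-4})$ for $d\ge6$. You instead reduce everything to the single polar-coordinate formula $|G_1(g)|=\frac1n\int_{S^{n-1}}h^{-n/d}\,d\sigma$ (with $h=g|_{S^{n-1}}$) and classify membership in $\F_d(\R^n)$ by local integrability of $h^{-n/d}$ near the zeros of $h$; for an isolated order-two zero this gives the clean threshold $d(n-1)>2n$, which separates the open regime of parts (2)--(3) from the non-open regime of parts (4)--(5). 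Your example $g=\bigl(\sum_{i<j}x_i^2x_j^2\bigr)|x|^{d-4}$ is also simpler and more symmetric than the paper's, and your non-openness argument (perturb by $-\varepsilon q$ with $q$ positive definite, making $g-\varepsilon q$ negative somewhere so $|G_1(g-\varepsilon q)|=+\infty$) is more direct than the paper's one-parameter families. What the paper's concrete route buys in return is full self-containment: it avoids the even-vanishing-order fact for non-negative analytic restrictions that you invoke in part (3), and it gives an explicit canonical-form description of $\F_4(\R^2)$ rather than the criterion ``$h>0$ on $S^1$.'' Your integrability criterion, on the other hand, makes transparent \emph{why} the borderline falls exactly where the lemma says it does, which the paper leaves implicit in its separate computations.
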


\begin{proof} \begin{enumerate}
\item $\F_d(\R^n)$ is a convex cone, as proved in \cite[Lemma 2.1]{lasserre}.

The polynomial $g_0(x)=\sum_{i=1}^n  x_i^d$ is an interior point in $\F_d(\R^n)$. In fact, if $g(x)=\sum_{\alpha\in\N_d^n} g_\alpha x^\alpha$ is such that $|g_\alpha-g_{0,\alpha}|<\varepsilon$, for some $\varepsilon<\frac{1}{2\left(1+\left({n+d-1\choose d}-n\right)\right)}$ for every $\alpha\in\N_d^n$, then
\begin{align*}
g(x)&\ge (1-\varepsilon)\sum_{i=1}^n  x_i^d-\varepsilon \left({n+d-1\choose d}-n\right)\Vert x\Vert_\infty^d\cr
&\geq\left(1-\varepsilon\left(1+\left({n+d-1\choose d}-n\right)\right)\right)\Vert x\Vert_\infty^d\cr
&\geq\frac{1}{2}\Vert x\Vert_\infty^d,
\end{align*}
where $\Vert x\Vert_\infty=\max\{|x_i|\,:\,1\leq i\leq n\}$. Then, $G_1(g)$ is bounded, so $g\in\F_d(\R^n)$.

The polynomial 
$tg_0$ belongs to $\F_d(\R^n)$ for any $t>0$ but the zero polynomial does not belong to $\F_d(\R^n)$. 
Therefore $\F_d(\R^n)$ is not closed.

\item Applying  Sylvester's law of inertia \cite{Sylvester} for quadratic forms, any $g\in\H_2(\R^n)$ can be written in the canonical form $g(x)=\sum_{i=1}^n \alpha_i x_i^2$  with an appropriate change of coordinates. Then $g\in\F_2(\R^n)$ if and only if $\alpha_i>0$ for all $i=1,\dots n$ (if and only if $G_1(g)$ is an ellipsoid). That clearly implies that $\F_2(\R^n)$ is open.

\item Similarly, any $g\in\H_4(\R^2)$ can be written, with an appropriate change of coordinates, in the
canonical form $g(x,y)=ax^4+2bx^2y^2+cy^4$ (see \cite[Les.~XV]{Sa}).

Notice that written in this canonical form $G_1(g)$ is bounded if and only if $a,c>0$ and $b>-\sqrt{ac}$. Indeed, if $G_1(g)$ is bounded then necessarily $a>0$ and $c>0$. In such case, if $b\leq-\sqrt{ac}$ then $b^2\geq ac$ and there exists some $\lambda=-\frac{b}{2a}>0$ such that
$$
a+2b\lambda+c\lambda^2\leq0
$$
and then all the points $(x,y)\in\R^2$ with $y=\sqrt{\lambda}x$ belong to $G_1(g)$. Conversely, if $a,c>0$ and $h=b+\sqrt{ac}>0$, writing
$$
g(x,y)=(\sqrt{a}x^2-\sqrt{c}y^2)^2+2hx^2y^2
$$
we have that if $(x,y)\in G_1(g)$, then $|\sqrt{a}x^2-\sqrt{c}y^2|\le1$, and $2hx^2y^2\le1$. The two inequalities imply $|x|,|y|$ are bounded and then $G_1(g)$ is bounded.

Furthermore, $g\in\F_4(\R^2)$ if and only if $a,c>0$ and $b>-\sqrt{ac}$.
Indeed, if $|G_1(g)|<+\infty$, then $a,c>0$, otherwise for every $y_0\in\R$, $g(x,y_0)\leq0$ for every $x$ large enough, or for every $x_0\in\R$ $g(x_0,y)\leq0$ for every $y$ large enough; in any case $\iint e^{-g(x,y)}\,dxdy=+\infty$. If $a,c>0$ and $b\le -\sqrt{ac}$, then $g(x,y)\le(\sqrt{a}x^2-\sqrt{c}y^2)^2=(\sqrt{a}x+\sqrt{c}y)^2(\sqrt{a}x-\sqrt{c}y)^2$. The change of variables $u=\sqrt{a}x+\sqrt{c}y$, $v=\sqrt{a}x-\sqrt{c}y$ and the fact $\iint e^{-u^2v^2}\,dudv=+\infty$ show that $|G_1(g)|=+\infty$. Therefore, $a,c>0$ and $b>-\sqrt{ac}$. Conversely, if
$a,c>0$ and $b>-\sqrt{ac}$, then $G_1(g)$ is bounded, and therefore $|G_1(g)|<+\infty$.

Consequently, $g\in\F_4(\R^2)$ if and only if written in its canonical form $a,c>0$ and $b>-\sqrt{ac}$ and then $\F_4(\R^2)$ is open.


%

\item Let $g(x,y,z)=x^4+y^4+z^4-2\sqrt{2}x^2yz$. Then $G_1(g)$ is unbounded (it contains the lines $y=z$, $x=\pm\root 4\of{2}y$). But $|G_1(g)|<+\infty$. In fact, using Lemma \ref{lem:IntegralOfg}, and since $g$ is even with respect to $x$ and for $y,z>0$, we have that $g(x,y,z)=g(x,-y,-z)\le g(x,-y,z)=g(x,y,-z)$, it is enough to prove that
    $$\iiint_{[0,+\infty)^3}
     e^{-g(x,y,z)}\,dxdydz<+\infty.$$

    The change of variables $x=u, y=u v, z=u w$, with Jacobian $J(u,v,w)=u^2$, rewrites the previous integral as
    $$\iiint_{[0,+\infty)^3} u^2 e^{-u^4 h(v,w)}\,dudvdw
   $$
    where $h(v,w)=1+v^4+w^4-2\sqrt{2}vw$. The change of variables
    $\overline{u}=u h(v,w)^\frac14$ shows that the previous integral equals
    $$
    \iint_{[0,+\infty)^2}\frac{dv\,dw}{ h(v,w)^{3/4}}\int_0^{+\infty} \overline{u}^2 e^{-\overline{u}^4}\,d\overline{u}.
    $$
    Therefore, it suffices to see that $ h(v,w)^{-3/4} $ is integrable in $[0,+\infty)^2$. Note that $h$ can be written as
    $$
    h(v,w)=\sqrt{2}(v-w)^2+(v^2-\gamma^2)^2+(w^2-\gamma^2)^2
    $$
    with $\gamma=2^{-1/4}$. Notice that $h(\gamma,\gamma)=0$ and that for every $(v,w)\in[0,+\infty)^2$ such that $(v,w)\neq (\gamma,\gamma)$ we have that $h(v,w)>0$.

    First, for $(v,w)\in[0,2\gamma]\times[0,2\gamma]$, the bound
    $$
    h(v,w)\ge \gamma^2((v-\gamma)^2+(w-\gamma)^2)\ge \sqrt{2}|v-\gamma| |w-\gamma|
    $$
    shows the integrability of $ h(v,w)^{-3/4} $ in $[0,2\gamma]\times [0,2\gamma]$.

    Second, for $(v,w)\in[0,2\gamma]\times[2\gamma,+\infty)$, the bound
    $$
    h(v,w)\ge \gamma^2(v-\gamma)^2+\tfrac{1}{4}w^4\ge \tfrac{1}{2^{1/4}}|v-\gamma| w^2
    $$
    shows the integrability of $ h(v,w)^{-3/4} $ in $[0,2\gamma]\times[2\gamma,+\infty)$. A similar bound shows the integrability in $[2\gamma,+\infty)\times[0,2\gamma]$.

    Finally, for $(v,w)\in[2\gamma,+\infty)\times[2\gamma,+\infty)$, the bound
    $$
    h(v,w)\ge \tfrac{1}{4}(v^4+w^4)\ge \tfrac{1}{2}v^2w^2
    $$
    shows the integrability of $ h(v,w)^{-3/4} $ in $[2\gamma,+\infty)\times[2\gamma,+\infty)$.

    \begin{figure}[!h]
    \includegraphics[scale=.35,bb=10 30 350 270,clip]{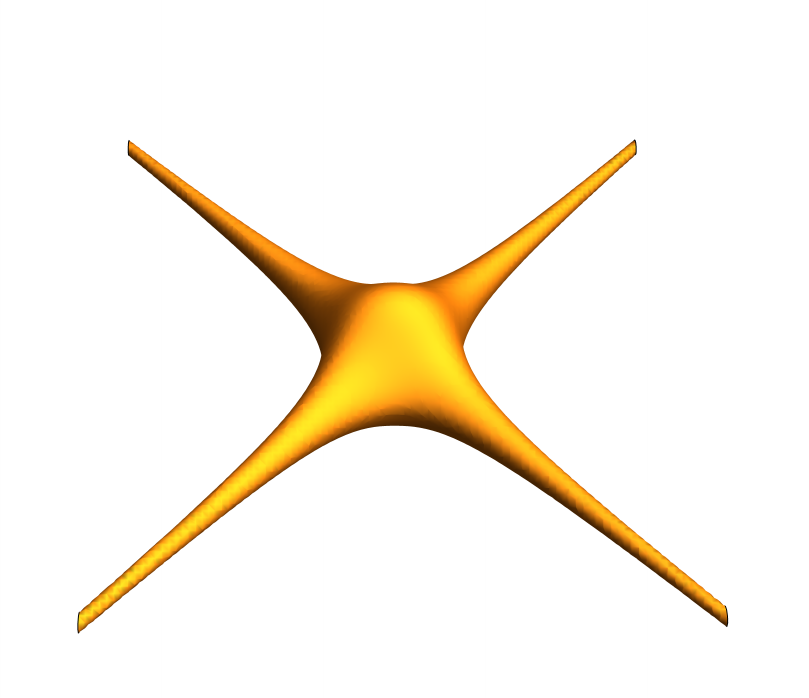}
    \caption{$G_1(g)$ for $g(x,y,z)=x^4+y^4+z^4-2\sqrt{2}x^2yz$.}
    \end{figure}

     Adding $x_i^4$ for the rest of variables, we can construct an example in $\R^n$ for $n\ge3$
     of a polynomial $g$ such that $G_1(g)$ is unbounded but  $|G_1(g)|<+\infty$.

    The polynomial $g_t(x,y,z)=x^4+y^4+z^4-tx^2yz$ for any $t>2\sqrt{2}$ does not belong to $\F_4(\R^n)$ (since $g_t(\root 4\of{y},y,y)=(4-t\sqrt{2})y^4<0$). A similar example can be constructed in $\R^n$ for $n\ge3$ as well, so $\F_4(\R^n)$ is not open for $n\ge3$.

\item Consider $g(x,y)=(x^2-y^2)^2(x^{d-4}+y^{d-4})$ in $\R^2$. Then, $g\in \F_d(\R^2)$, since $\{(x,y)\in\R^2:x^{d-4}+y^{d-4}\le x^2+y^2\}$ is compact and  $(x^2-y^2)^2(x^{2}+y^{2})\in\F_6(\R^2)$, by  Lemma \ref{lem:IntegralOfg} with $m=3$, and integrating in polar coordinates:
\[
\begin{split}
\int_{\R^2} & \exp\{-((x^2-y^2)^2(x^{2}+y^{2}))^{1/3}\}\,dxdy \\
& = \int_{0}^{2\pi}\int_0^{+\infty}r\exp\{-r^2(\cos^2\theta-\sin^2\theta)^{2/3}\} drd\theta \\
& = \int_0^{2\pi}\frac{d\theta}{2(\cos^2\theta-\sin^2\theta)^{2/3}} 
<+\infty.
\end{split}
\]

However, $G_1(g)$ is unbounded, since it contains the lines $y=\pm x$. Moreover, $g(x,y)-(1-t)x^d$ is not in $\F_d(\R^2)$ for any $t<1$, since it takes negative values  for $x=y$. Thus  $\F_d(\R^2)$ is not open.
    \begin{figure}[!h]
    \includegraphics[scale=.3,bb=20 20 350 350,clip]{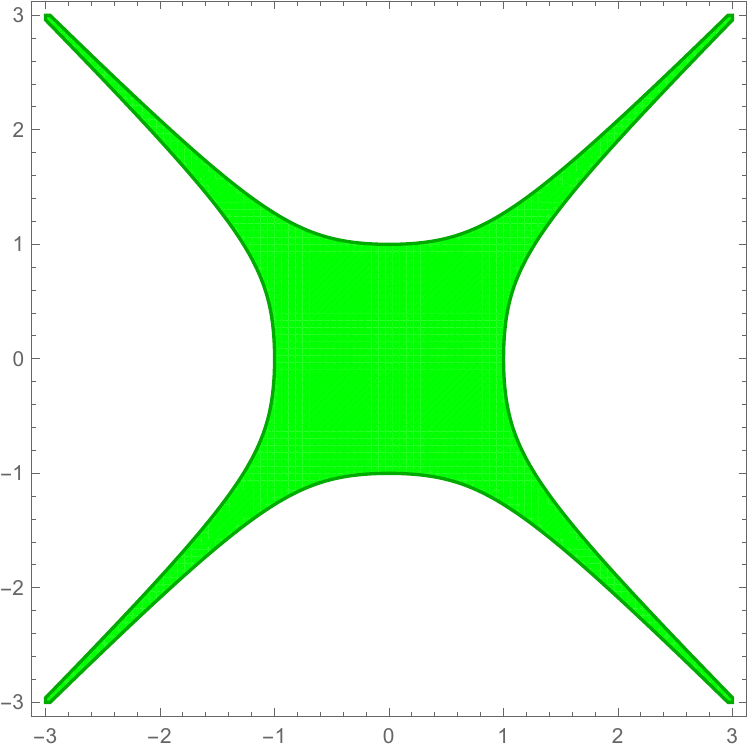}
    \caption{$G_1(g)$ for $g(x,y)=(x^2-y^2)^2(x^{2}+y^{2})$.}
    \end{figure}

    Adding $x_i^d$ for the rest of variables, we can construct an example in $\R^n$ for $n\ge3$.\qedhere
\end{enumerate}
\end{proof}

\begin{rmk}
It is worth mentioning here the connection of homogeneous positive multivariate polynomials with Hilbert's seventeenth problem, one of the 23 Hilbert problems set out in a celebrated list compiled in 1900 by David Hilbert.
It concerns the expression of positive definite rational functions as sums of quotients of squares.

In 1888, Hilbert himself \cite{Hi} showed that every non-negative homogeneous polynomial in $n$ variables and degree $d$ can be represented as sum of squares of other polynomials if and only if either (1) $ n = 2$, (2) $d = 2$ or (3) $n = 3$ and $d = 4$. This result, however, cannot be used in the previous lemma, in the study of the structure of polynomials, since that canonical way of writing homogeneous polynomials is not specific enough to  suggest
 a suitable change of variables,
  as it was done in (2) or (3) in Lemma \ref{F_d}.
\end{rmk}

The following result, of independent interest, will be needed for the study of the convergence of coefficients of polynomials.

\begin{propo}\label{prop:injective}
Consider the map
$
\Phi:\F_d(\R^n)\to\R^{h_d(n)}
$
given by
$$
\Phi(g)=\left(\tfrac{1}{\Gamma\left(\tfrac{n }{d}+1\right)}\int_{\R^n} x^\alpha \exp(-g(x))\,dx\right)_{\alpha\in\N_d^n}.
$$

The map $\Phi$ is  one-to-one, continuous and differentiable, and its inverse (defined on the image set)
is also continuous and differentiable.
\end{propo}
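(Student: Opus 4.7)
The plan is to realize $\Phi$ (up to a multiplicative constant) as the gradient of a strictly convex $C^2$ function on $\interior\F_d(\R^n)$, and then invoke the inverse function theorem. Define
$$
F(g)=\int_{\R^n}e^{-g(x)}\,dx,\qquad g\in\interior\F_d(\R^n),
$$
which by Lemma \ref{lem:IntegralOfg} equals $\Gamma(\tfrac{n}{d}+1)|G_1(g)|$. A formal differentiation in the coefficient $g_\alpha$ of $g$ suggests $\partial F/\partial g_\alpha=-\int_{\R^n}x^\alpha e^{-g(x)}\,dx$, so that
$$
\Phi(g)=-\frac{1}{\Gamma(\tfrac{n}{d}+1)}\nabla F(g).
$$

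The first technical step is to make this rigorous by showing that $F\in C^2(\interior\F_d(\R^n))$ and that the differentiations can be passed under the integral. The key ingredient is a pointwise lower bound of the form $g(x)\geq c\|x\|_\infty^d$ for every $g\in\interior\F_d(\R^n)$: since $g_0^*(x)=\sum_{i=1}^nx_i^d$ belongs to $\interior\F_d(\R^n)$ (cf.~Lemma~\ref{F_d}(1)) and $\F_d(\R^n)$ is a convex cone, whenever $g$ is interior there exists $\varepsilon>0$ with $g-\varepsilon g_0^*\in\F_d(\R^n)$; as elements of $\F_d(\R^n)$ are non-negative, this gives $g(x)\geq\varepsilon g_0^*(x)\geq\varepsilon\|x\|_\infty^d$. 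A compactness argument on $S^{n-1}$ shows that this bound persists, with $\varepsilon$ replaced by $\varepsilon/2$, uniformly on a small coefficient-neighborhood of $g$. Lebesgue dominated convergence then applies with majorants of the form $C\|x\|_\infty^{2d}e^{-(\varepsilon/2)\|x\|_\infty^d}$, and one obtains
$$
\frac{\partial^2 F}{\partial g_\alpha\partial g_\beta}(g)=\int_{\R^n}x^{\alpha+\beta}e^{-g(x)}\,dx,\qquad\alpha,\beta\in\N_d^n,
$$
with continuous dependence on $g$.

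Injectivity and invertibility of the Jacobian are now immediate. For any nonzero vector $v=(v_\alpha)_{\alpha\in\N_d^n}\in\R^{h_d(n)}$, setting $p(x)=\sum_\alpha v_\alpha x^\alpha$ gives
$$
v^{\top}\bigl(\mathrm{Hess}\,F(g)\bigr)v=\int_{\R^n}p(x)^2e^{-g(x)}\,dx>0,
$$
since $p$ is a nonzero polynomial. Hence $F$ is strictly convex on the convex set $\interior\F_d(\R^n)$, its gradient $\nabla F$ (and so $\Phi$) is injective, and the Jacobian $D\Phi(g)=-\Gamma(\tfrac{n}{d}+1)^{-1}\mathrm{Hess}\,F(g)$ is negative definite, hence invertible, at every interior $g$.

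Combining global injectivity with the inverse function theorem applied at each point, $\Phi$ is a $C^1$ diffeomorphism from $\interior\F_d(\R^n)$ onto an open subset of $\R^{h_d(n)}$, and in particular its inverse is continuous and differentiable. The main obstacle is the uniform lower bound $g(x)\geq c\|x\|_\infty^d$ near a fixed interior point; once this is in hand, the integrability of all polynomial weights against $e^{-g(x)}$, the differentiation under the integral sign, and the concluding application of the inverse function theorem follow by standard arguments.
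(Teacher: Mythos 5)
Your proof is correct and follows the same overall strategy as the paper: realize $\Phi$ (up to the constant $-\Gamma(\tfrac{n}{d}+1)^{-1}$) as the gradient of the volume functional $F(g)=\int e^{-g}$, compute the Hessian, and apply the inverse function theorem. But there are two genuine differences worth noting. First, the paper leans on Lasserre's Theorem 2.4 for the regularity of $F$ and for the derivative formula; you instead give a self-contained justification of differentiation under the integral sign via the uniform lower bound $g(x)\geq\varepsilon\|x\|_\infty^d$, obtained from $g-\varepsilon g_0^*\in\F_d(\R^n)$ (with $g_0^*(x)=\sum_i x_i^d$) and the non-negativity of elements of $\F_d(\R^n)$; this also explains why monomials of every degree are integrable against $e^{-g}$ for interior $g$, a fact the paper relegates to a remark. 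Second, and more substantively, the paper observes that the Hessian is negative \emph{semi}-definite and then invokes the Gale--Nikaido global univalence theorem to conclude $\Phi$ is one-to-one. You instead note that the Hessian quadratic form equals $-\Gamma(\tfrac{n}{d}+1)^{-1}\int p(x)^2e^{-g(x)}\,dx$ which is strictly negative for any nonzero polynomial $p$, i.e.\ the Hessian is negative \emph{definite}; strict convexity of $F$ on the convex set $\interior\F_d(\R^n)$ then gives injectivity of $\nabla F$ directly (via strict monotonicity of the gradient), with no need for Gale--Nikaido. This is a mild but real simplification of the paper's argument, since the paper's own computation already implies definiteness.
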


\begin{proof} The integrability is guaranteed by Lemma \ref{identity_r}, so $\Phi$ is well defined on $\F_d(\R^n)$.  As it was shown in \cite{lasserre}, the function
$$
\omega(g)=|G_1(g)|=
\frac{1}{\Gamma\left(\frac{n }{d}+1\right)}\int_{\R^n}\exp(-g(x))\,dx,\hskip1cm g\in\F_d(\R^n)
$$
is a strictly convex function. Moreover, its gradient is $\nabla \omega=-\Phi$. Consequently, its Hessian, a positive semi-definite matrix, is the Jacobian matrix of $-\Phi$. More precisely, as it was shown in \cite{lasserre},
\begin{equation}\label{eq:DerivativeOfPsi}
\frac{\partial \Phi_\alpha}{\partial g_\beta}=
-\tfrac{1}{\Gamma\left(\tfrac{n }{d}+1\right)}
\int_{\R^n} x^{\alpha+\beta}\exp(-g(x))\,dx,
\end{equation}
where $\beta\in\N_d^n$, and thus for every $(h_\alpha)\in\R^{h_d(n)}$,
$$
\sum_{\alpha,\beta\in\N_d^n} h_\alpha h_\beta\frac{\partial \Phi_\alpha}{\partial g_\beta}=-\tfrac{1}{\Gamma\left(\tfrac{n }{d}+1\right)}
\int_{\R^n} h(x)^2\exp(-g(x))\,dx,
$$
where $h=\sum_{\alpha\in\N_d^n}h_\alpha x^\alpha$. This shows that the matrix $(\frac{\partial \Phi_\alpha}{\partial g_\beta})$ is negative semi-definite. By \cite[Theorem 6]{GN} we get that $\Phi$ is globally one-to-one. The continuity and differentiability of the inverse function follow from the Inverse Function Theorem.
\end{proof}


\begin{example}
For $n=d=2$, the function $\Phi:\F_2(\R^2)\to\R^3$ is defined as
$$
\Phi(g)=(4ac- b^2)^{-3/2}
\left(
{4\pi c},
{-2\pi b},
{4\pi a}
\right)
$$
is bijective from $\{g\in\F_2(\R^2):g(x,y)=ax^2+bxy+cy^2\,(a,c>0,\,4 ac>b^2)\}$ onto $\{(a',b',c')\in\R^3:
a',c'>0,\,
a'c'>(b')^2\}$.
\end{example}

\section{Aproximation of log-concave functions by polynomials}\label{section:prob1}

Before showing the existence of a solution for \textit{Problem \ref{problem1}} whenever $f\in\mathcal{F}(\R^n)$ we will start by considering the following example, which shows that without any convexity assumption on $f$ \textit{Problem \ref{problem1}} can be ill-posed (see also Example \ref{example:quasiconcave}). Nevertheless, if $f:\R^n\to[0,+\infty)$ is a function with $\Vert f\Vert_\infty=f(0)=1$ and compact support, considering $K$ the convex hull of $\textrm{supp}f$ we have that $\chi_K\in\mathcal{F}(\R^n)$ and $f\leq\chi_K$. Therefore, a solution of \textit{Problem \ref{problem1}} for $\chi_K$ will provide a function $g\in\H_d[x]$ and a $t\geq 1$ for which $f(x)\le t \exp(-g(x)^{1/d})$ and $|G_1(g)|$ is finite.


\begin{example}\label{example:no_integ}
Let $f=\chi_A$, where $A$ is the union of concentric spherical shells:
$$
A=
\bigcup_{k=1}^\infty
\{x\in\R^n:k\le |x|\le k+\tfrac{1}{2^k}\}.
$$
The function $f$ is integrable, but  $f(x)\le t \exp(-g(x)^{1/d})$ for some $g\in\H_d[x]$  would imply $g(x)\le (\log t)^d$ for all $x\in A$. The only $d$-homogeneous bounded polynomial is $g=0$, for which $|G_1(g)|=+\infty.$
\end{example}

Having this example in mind, we will solve \textit{Problem \ref{problem1}} in the setting of
 log-concave integrable functions.

For $f\in\mathcal F(\mathbb R^n)$ with $\Vert f\Vert_\infty=f(0)=1$, let $K_\lambda(f)=\{x\in\mathbb R^n:f(x)\geq\lambda\}$. These super-level sets are convex by the log-concavity of $f$.

Given any quasi-convex function $f$ (i.e., a function whose super-level sets are convex) and $t\geq1$, let
$$
H_t(f)
=\bigcup_{\lambda\in(0,1)}\log(t/\lambda)^{-1}K_\lambda(f).
$$
Since $K_\lambda(f)$ are convex sets  that contain the origin, $H_t(f)$ is decreasing in $t\geq1$.

The following  lemma shows the relation between $H_t(f)$ and \textit{Problem \ref{problem1}}.

\begin{lemma}\label{lem:equiv}
Let $f:\R^n\to[0,+\infty)$ be a log-concave function with $\Vert f\Vert_\infty=f(0)=1$, $d\in\mathbb N$ even $g\in\mathbb H_d(\mathbb R^n)$ and $t\geq1$.
The following conditions are equivalent:
\begin{enumerate}[(i)]
\item $f(x) \le t\exp(-g(x)^{1/d})$ for all $x\in\R^n$.
\item $H_t(f) \subset G_1(g).$
\end{enumerate}
\end{lemma}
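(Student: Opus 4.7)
The plan is to exploit the $d$-homogeneity of $g$ (which gives $g(cy)=c^{d}g(y)$ for every $c>0$) together with the observation that, since $t\geq 1$ and $\lambda\in(0,1)$, the scalar $\log(t/\lambda)$ is strictly positive, so dividing by it and taking $d$-th powers are all reversible operations. Via this homogeneity, the set inclusion in (ii) is nothing but the pointwise inequality in (i) rewritten at the level of the super-level sets of $f$.

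For (i) $\Rightarrow$ (ii), I would pick an arbitrary $y\in H_t(f)$ and, by definition of $H_t(f)$, write $y=\log(t/\lambda)^{-1}x$ for some $\lambda\in(0,1)$ and some $x\in K_\lambda(f)$, i.e., $f(x)\geq\lambda$. Applying (i) at $x$ and using $f(x)\geq\lambda$ yields $\lambda\le t\exp(-g(x)^{1/d})$, equivalently $g(x)^{1/d}\leq\log(t/\lambda)$. Raising to the $d$-th power and using $d$-homogeneity gives $g(y)=\log(t/\lambda)^{-d}g(x)\leq 1$, so $y\in G_1(g)$.

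For the converse, I would fix $x\in\mathbb{R}^n$ and split according to $f(x)$. If $f(x)=0$ there is nothing to prove. If $0<f(x)<1$, choose $\lambda=f(x)\in(0,1)$; then $x\in K_\lambda(f)$, so $\log(t/\lambda)^{-1}x\in H_t(f)\subseteq G_1(g)$, which by homogeneity reads $g(x)\le\log(t/f(x))^d$, and taking $d$-th roots yields $f(x)\leq t\exp(-g(x)^{1/d})$. The only delicate case is $f(x)=1$, where $\lambda=1$ is excluded from the union defining $H_t(f)$: here I would apply the same argument for every $\lambda\in(0,1)$ to obtain $g(x)\leq\log(t/\lambda)^{d}$ and then let $\lambda\to 1^{-}$, using continuity of $g$ to conclude $g(x)\leq(\log t)^{d}$ when $t>1$, and $g(x)\leq 0$ (hence $g(x)=0$, since the problem is only meaningful for $g\geq 0$) when $t=1$; both subcases give (i).

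The main (and essentially the only) subtle point is this boundary case $f(x)=1$, which the asymmetric index set $\lambda\in(0,1)$ in the definition of $H_t(f)$ forces us to handle by a limiting argument. Everything else is a direct manipulation using the monotonicity of $c\mapsto c^{d}$ on $[0,\infty)$ and the fact that both sides of the equivalence are genuinely the same inequality viewed on the graph of $f$ versus on its sub-graph scaling.
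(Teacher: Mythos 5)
Your proof is correct and follows essentially the same route as the paper: the forward implication is the same homogeneity computation, and the reverse implication is the same case split on the value of $f(x)$, with the boundary case $f(x)=1$ settled by sending $\lambda\to 1^{-}$ in the bound $g(x)\le(\log(t/\lambda))^{d}$. Two small remarks: the appeal to ``continuity of $g$'' is superfluous (once $g(x)\le(\log(t/\lambda))^{d}$ holds for every $\lambda\in(0,1)$, just take the infimum of the right-hand side over $\lambda$; $g(x)$ is a fixed number), and you handle the subcase $t=1$ a bit more directly than the paper, which instead detours through the monotonicity of $H_t(f)$ in $t$ and the already-established $t>1$ case.
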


\begin{proof}
Assume that condition \textit{(i)} holds. Then, for any $x\in H_t(f)$, there exists $\lambda\in(0,1)$ such that $(\log(t/\lambda))x\in K_\lambda(f)$. In other words,
$f((\log(t/\lambda)x)\ge\lambda
$. But then, using \textit{(i)} and the homogeneity of $g$,
$$
\lambda\le f((\log(t/\lambda)x))
\le
t \exp(-g(\log(t/\lambda)x)^{1/d})
=
 t \exp(-\log(t/\lambda)g(x)^{1/d})
$$
and hence $g(x)\le1$. So  \textit{(ii)} is proved.

Conversely, assume that \textit{(ii)} holds and take any $x\in\R^n$.

If $f(x)<1$, let $\lambda=f(x)$. Clearly, $(\log(t/\lambda))^{-1}x\in H_t(f)$, so using condition \textit{(ii)},
$
(\log(t/\lambda))^{-1}x\in G_1(g),
$
and therefore, $f(x)\le t \exp(-g(x)^{1/d})$.

Now assume that $f(x)=1$. If $t>1$, then $1\le t \exp(-g(x)^{1/d})$ is equivalent to $g(x)\le (\log t)^d$. Take any $\lambda\in(0,1)$. Then $x\in K_\lambda(f)$ and therefore
$$
((\log(t/\lambda))^{-1}x\in
((\log(t/\lambda))^{-1} K_\lambda(f)
\subset
H_t(f)
\subset G_1(g)
$$
by condition \textit{(ii)}. Consequently, $g(x)\le (\log(t/\lambda))^d$ for any $\lambda\in(0,1)$. But then $g(x)\le (\log t)^d$.

Finally, assume that $f(x)=t=1$. It is left to show that $g(x)=0$. Since $H_t(f)$ is decreasing in $t\ge1$,
$$
H_t(f)\subset H_1(f)\subset G_1(g)
$$
by condition \textit{(ii)}. Using the case $t>1$ proved above, $g(x)\le (\log t)^d$. This inequality is true for any $t>1$, and then $g(x)=0$.
\end{proof}

The following result  gives a monotonicity  behaviour, crucial in the study of the  minimization problem.
Moreover, it will imply some consequences about  the boundedness of $H_t(f)$.

\begin{lemma}\label{lem:reverseH_tContainment}
Let $f:\R^n\to[0,+\infty)$ be a log-concave function with $\Vert f\Vert_\infty=f(0)=1$. Then, for every $1<t_0<t_1$, we have that
\begin{equation*}
(\log t_0 )H_{t_0}(f)\subset (\log t_1) H_{t_1}(f).
\label{inclusion_log}
\end{equation*}
\end{lemma}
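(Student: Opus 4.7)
The plan is to unwind the definitions and reduce the containment to two basic facts: (1) the set on the right is star-shaped with respect to the origin, and (2) for each fixed level $\lambda\in(0,1)$, the scaling factor $\log t/\log(t/\lambda)$ is strictly increasing in $t>1$.

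For the star-shapedness, note that $f(0)=\Vert f\Vert_\infty=1\ge\lambda$ for every $\lambda\in(0,1)$, so $0\in K_\lambda(f)$. Since $f$ is log-concave each $K_\lambda(f)$ is convex, and consequently each dilate $\log(t_1/\lambda)^{-1}K_\lambda(f)$ is a convex set containing the origin, hence star-shaped with respect to $0$. A union of sets star-shaped with respect to the origin is again star-shaped with respect to the origin, so $H_{t_1}(f)$, and therefore also $(\log t_1)H_{t_1}(f)$, has this property.

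For the monotonicity, write $\phi(t,\lambda)=\log t/\log(t/\lambda)$ for $t>1$ and $\lambda\in(0,1)$. A direct differentiation gives
\[
\frac{\partial\phi}{\partial t}(t,\lambda)=\frac{-\log\lambda}{t\,(\log t-\log\lambda)^2}>0,
\]
so $\phi(t_0,\lambda)<\phi(t_1,\lambda)$ whenever $1<t_0<t_1$.

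Now pick an arbitrary $x\in(\log t_0)H_{t_0}(f)$. By definition there exist $\lambda\in(0,1)$ and $y\in K_\lambda(f)$ with $x=\phi(t_0,\lambda)y$. Taking the same level $\mu=\lambda$ in the definition of $H_{t_1}(f)$ shows that $\phi(t_1,\lambda)y\in(\log t_1)H_{t_1}(f)$. Since $\phi(t_0,\lambda)/\phi(t_1,\lambda)\in(0,1)$, we can write
\[
x=\frac{\phi(t_0,\lambda)}{\phi(t_1,\lambda)}\,\bigl(\phi(t_1,\lambda)y\bigr),
\]
and the star-shapedness of $(\log t_1)H_{t_1}(f)$ with respect to the origin gives $x\in(\log t_1)H_{t_1}(f)$. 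There is no real obstacle here; the only step that would need care if one tried to weaken the hypotheses is that we genuinely use $0\in K_\lambda(f)$ (to guarantee star-shapedness), which is why the assumption $f(0)=\Vert f\Vert_\infty=1$ enters.
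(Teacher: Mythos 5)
Your proof is correct and follows essentially the same route as the paper's: both reduce the claim to the star-shapedness of the super-level sets $K_\lambda(f)$ with respect to the origin (since $f(0)=\Vert f\Vert_\infty=1$) together with the monotonicity in $t$ of the scaling factor $\log t/\log(t/\lambda)$. The paper carries this out by algebraically rewriting $H_{t_0}(f)$ and comparing the individual dilates $\alpha_1 K_\lambda(f)\subset\alpha_2 K_\lambda(f)$, whereas you verify monotonicity by differentiation and then invoke star-shapedness of the whole union; this is a cosmetic rather than structural difference.
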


\begin{proof}
Since $\log$ is an increasing function and for every $\lambda\in(0,1)$ $K_\lambda(f)$ is star-shaped with respect to the origin, we have that
\begin{align*}
H_{t_0}(f)
&=
\frac{\log t_1}{\log t_0}\bigcup_{\lambda\in(0,1)}\frac{1}{\log t_1+\frac{\log t_1}{\log t_0}\log(1/\lambda)}K_\lambda(f)
\\
&\subset
\frac{\log t_1}{\log t_0}\bigcup_{\lambda\in(0,1)}\frac{1}{\log t_1+\log(1/\lambda)}K_\lambda(f)
\\
&=
\frac{\log t_1}{\log t_0}H_{t_1}(f).\qedhere
\end{align*}
\end{proof}

\begin{rmk} In the previous two lemmas we have not assumed the integrability of $f$. Moreover, in Lemma \ref{lem:equiv} we have only used  the log-concavity of $f$ in the case $t=1$; indeed, in both lemmas, the only fact needed is that $\alpha_1 K_\lambda(f)\subset \alpha_2 K_\lambda(f)$ for any $0<\alpha_1\le\alpha_2$, which is equivalent to the fact that $K_\lambda(f)$ is star-shaped with respect to the origin.
Finally, the inclusions in Lemma \ref{lem:reverseH_tContainment} above are sharp, see for instance Example \ref{examples1} \eqref{ex:infty_at_1}.
\end{rmk}

Now we can state the boundedness of $H_t(f)$.

\begin{lemma}\label{lem:boundnessH_t}
Let $f\in\mathcal F(\R^n)$ with $\Vert f\Vert_\infty=f(0)=1$ and an even $d\in\mathbb N$. Then $H_t(f)$ is bounded for every $t>1$. Moreover, if $H_1(f)$ is unbounded, then $|H_1(f)|=+\infty$.
\end{lemma}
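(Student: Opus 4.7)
I will prove the two assertions separately. For the first, I fix $\lambda_0\in(0,1)$ and split the union defining $H_t(f)$ at $\lambda_0$, using boundedness of $K_{\lambda_0}(f)$ and a linear growth estimate on $K_\lambda(f)$ for $\lambda\le\lambda_0$. For the second, I exploit log-concavity to show that the family $E_\lambda:=\log(1/\lambda)^{-1}K_\lambda(f)$ is monotone in $\lambda$, so that $H_1(f)$ is convex with non-empty interior, and then appeal to the classical fact that an unbounded convex set with non-empty interior in $\R^n$ has infinite Lebesgue measure.

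\textbf{Boundedness of $H_t(f)$ for $t>1$.} First, $K_{\lambda_0}(f)$ is convex, has positive Lebesgue measure (because $\|f\|_\infty=1>\lambda_0$ forces $\{f>\lambda_0\}$ to have positive measure), hence has non-empty interior, and has finite measure since $\lambda_0|K_{\lambda_0}(f)|\le\int_{\R^n}f<+\infty$; a convex set in $\R^n$ with non-empty interior and finite Lebesgue measure is bounded, say $K_{\lambda_0}(f)\subseteq RB_2^n$. Writing $u:=-\log f$ (convex with $u(0)=0$), the standard monotonicity of $t\mapsto u(tv)/t$ on $(0,+\infty)$ yields $u(x)\ge(|x|/R)\log(1/\lambda_0)$ for $|x|\ge R$, and consequently $K_\lambda(f)\subseteq (R\log(1/\lambda)/\log(1/\lambda_0))B_2^n$ for every $\lambda\le\lambda_0$. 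Splitting the union defining $H_t(f)$ at $\lambda_0$: for $\lambda\in[\lambda_0,1)$, $K_\lambda\subseteq K_{\lambda_0}\subseteq RB_2^n$ and $\log(t/\lambda)\ge\log t$ give $\log(t/\lambda)^{-1}K_\lambda\subseteq(R/\log t)B_2^n$; for $\lambda\in(0,\lambda_0)$, the growth estimate plus $\log(1/\lambda)\le\log(t/\lambda)$ give $\log(t/\lambda)^{-1}K_\lambda\subseteq (R/\log(1/\lambda_0))B_2^n$. Thus $H_t(f)$ sits inside the ball of radius $\max\{R/\log t,\,R/\log(1/\lambda_0)\}$.

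\textbf{The moreover statement.} For $0<\lambda<\lambda'<1$ I claim $E_\lambda\subseteq E_{\lambda'}$. Indeed, if $x\in E_\lambda$ then $\log(1/\lambda)x\in K_\lambda(f)$, and setting $\theta:=\log(1/\lambda')/\log(1/\lambda)\in(0,1)$, log-concavity combined with $f(0)=1$ yields
\[
f(\log(1/\lambda')x)=f\bigl(\theta\log(1/\lambda)x\bigr)\ge f(\log(1/\lambda)x)^\theta\ge\lambda^\theta=\lambda',
\]
i.e.~$x\in E_{\lambda'}$. Therefore $H_1(f)=\bigcup_{\lambda\in(0,1)}E_\lambda$ is an increasing union of convex sets, hence convex, and it contains $E_{1/2}$, which has non-empty interior since $K_{1/2}(f)$ does (as a positive-measure convex set). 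Suppose now that $H_1(f)$ is unbounded. Pick any ball $B(x_0,r)\subseteq H_1(f)$ and any sequence $x_k\in H_1(f)$ with $|x_k|\to+\infty$; by convexity $\mathrm{conv}(B(x_0,r)\cup\{x_k\})\subseteq H_1(f)$, and this convex hull has volume at least a dimensional constant times $r^{n-1}|x_k-x_0|$, which tends to $+\infty$. Hence $|H_1(f)|=+\infty$.

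\textbf{Main obstacle.} The delicate step is the monotonicity $E_\lambda\subseteq E_{\lambda'}$, which is the only place where log-concavity of $f$ (rather than the mere star-shapedness of the super-level sets used earlier in the section) enters, and it is this monotonicity that upgrades $H_1(f)$ from an a priori irregular union of convex sets into a genuine convex set. Everything else is essentially folklore: the exponential tails of integrable log-concave densities give the boundedness and linear growth of $K_\lambda(f)$, and the convex-geometry dichotomy between bounded and infinite-measure convex sets with non-empty interior is classical.
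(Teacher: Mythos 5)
Your proof is correct, and for the first assertion it takes a genuinely different route from the paper. Where the paper establishes the boundedness of $H_t(f)$ by invoking the functional L\"owner ellipsoid of $f$ (the result of \cite{agjv}/\cite{LSW} quoted in the introduction): one takes $g=\Vert\cdot\Vert_{\elip}^d\in\H_d(\R^n)$, deduces $H_{t_0}(f)\subseteq G_1(g)$ from Lemma~\ref{lem:equiv}, and then propagates boundedness to all $t>1$ via the monotonicity in Lemma~\ref{lem:reverseH_tContainment} --- you instead prove directly that the potential $u=-\log f$ grows at least linearly outside a bounded set, and then bound $H_t(f)$ by a ball by splitting the defining union at a fixed $\lambda_0$. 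Your version is self-contained (it does not lean on the existence of an optimal ellipsoid for $f$, which is a nontrivial theorem in its own right), whereas the paper's version is shorter but imports that exterior result. For the ``moreover'' part your argument is essentially identical to the paper's: the inclusion $E_\lambda\subseteq E_{\lambda'}$ for $\lambda<\lambda'$, obtained from log-concavity and $f(0)=1$, upgrades $H_1(f)$ to an increasing union of convex sets and hence a convex set with non-empty interior, at which point unboundedness forces infinite volume.

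Two small technical remarks. In the growth estimate $u(x)\ge(|x|/R)\log(1/\lambda_0)$ for $|x|\ge R$, you should take $R$ strictly larger than the minimal containing radius of $K_{\lambda_0}(f)$: the inequality $u(Rv)\ge\log(1/\lambda_0)$ for $|v|=1$, which you feed into the monotonicity of $s\mapsto u(sv)/s$, is only guaranteed once the sphere of radius $R$ lies entirely outside $K_{\lambda_0}(f)$; since you are free to choose $R$, this costs nothing. Second, the claim that $\|f\|_\infty>\lambda_0$ forces $K_{\lambda_0}(f)$ to have positive measure implicitly rules out degenerate $f$ (supported on a lower-dimensional affine set); the paper makes the analogous restriction explicit by noting ``we can assume $\int_{\R^n}f>0$.'' Both points are cosmetic and do not affect the correctness of the argument.
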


\begin{proof}
Let $t_0\exp(-\Vert\cdot\Vert_\elip)$ be the unique minimization ellipsoid verifying \eqref{ineq:func1} for the even log-concave function $f_s=\exp(-u_s)$, being $u_s$ the convex function whose epigraph is the convex hull of the functions $u(x)$ and $u_-(x):=u(-x)$. That is,
$$
\mathrm{epi}(u_s)=\mathrm{conv}(\mathrm{epi}(u),\mathrm{epi}(u_-)).
$$
 Notice that $f\le f_s$ and, since $f\in\mathcal{F}(\R^n)$, then also $f_s\in\mathcal{F}(\R^n)$ (see \cite{AAGJV}).

Let us observe that $g=\Vert\cdot\Vert_\elip^d\in\mathbb H_d(\R^n)$. Then $f\leq t_0\exp(-g^{1/d})$, which by
Lemma \ref{lem:equiv} means that $H_{t_0}(f)\subset G_1(g)$. In this case $G_1(g)$ is an ellipsoid, hence bounded,
so $H_{t_0}(f)$ is bounded too. Since $H_t(f)$ is decreasing on $t\geq 1$ and
by Lemma \ref{lem:reverseH_tContainment}, $(\log t)H_t(f)$ is increasing on $t>1$, we have that $H_t(f)$
is bounded for every $t>1$.

Finally, let us assume that $H_1(f)$ is unbounded. Let us observe that
$$
\frac{1}{\log(1/\lambda_1)}K_{\lambda_1}(f)\subset\frac{1}{\log(1/\lambda_2)}K_{\lambda_2}(f)
$$
for any $0<\lambda_1<\lambda_2\leq 1$. Indeed, take $x\in K_{\lambda_1}(f)$. Since $f(0)=1$ and $\log(1/\lambda_2)/\log(1/\lambda_1)\in[0,1]$,
$$
f\left(\frac{\log(1/\lambda_2)}{\log(1/\lambda_1)}x\right)\geq f(x)^{\frac{\log(1/\lambda_2)}{\log(1/\lambda_1)}}\geq \lambda_1^{\frac{\log(1/\lambda_2)}{\log(1/\lambda_1)}}=\lambda_2.
$$
Therefore $H_1(f)=\bigcup_{\lambda\in(0,1)}(\log (1/\lambda))^{-1}K_{\lambda}(f)$ is an increasing union of
convex sets, i.e., convex.
We can assume that $\int_{\R^n} f(x)dx>0$, and hence $|K_\lambda(f)|>0$ for some $\lambda\in(0,1)$, so it has non empty interior, and so has $H_1(f)$.
Since $H_1(f)$ is unbounded (and convex), then $|H_1(f)|=+\infty$, as desired.
\end{proof}

\begin{example}\label{example:quasiconcave}
The previous result is not true if the log-concavity assumption on $f$ is dropped. 
For instance, given $K\subseteq\R^n$ a convex body with $0\in K$ and $\alpha>n$ consider
$$
f(x)=
\begin{cases} 1 & x\in K,
\\
\Vert x\Vert_K^{-\alpha} & \text{otherwise.}
\end{cases}
$$
Then $f$ is quasi-concave and

\begin{eqnarray*}
\int_{\R^n}f(x)dx & = &|K|+\int_{\R^n\setminus K}\Vert x\Vert_K^{-\alpha}dx\cr
& = &|K|+\int_0^1|\{x\in\R^n\setminus K:\Vert x\Vert_K^{-\alpha}\geq t\}|dt\cr
& = &|K|+\int_0^1|t^{-1/\alpha}K\setminus K|dt=\frac{\alpha}{\alpha-n}|K|<+\infty.
\end{eqnarray*}

Its super-level sets are
$$
K_\lambda(f)=
\begin{cases}
\lambda^{{-1/\alpha}}K & 0<\lambda<1
\\
K &  \lambda=1,\end{cases}
$$
(note that these sets are convex), and thus for every $t\ge1$
$$
H_t(f)=
\bigcup_{\lambda\in(0,1)}\frac{1}{\lambda^{1/\alpha}\log (t/\lambda)}K=\R^n.\qedhere
$$
\end{example}

The following result shows the relation between $H_t(f)$ and the super-level sets of $f^\circ$. Recall that the polar body of a convex body $K$ containing the origin is $K^\circ=\{x\in\R^n\,:\,\langle x,y\rangle\leq 1 \text{ for every } y\in K\}$.

\begin{lemma}\label{lem:polar}
Let $f\in\mathcal{F}(\R^n)$ with $\Vert f\Vert_\infty=f(0)=1$. Then for any $t>1$ $H_t(f)$ is convex  and
$$
\overline{H_t(f)}=\{x\in\R^n\,:\,f^\circ(x)\geq\tfrac{1}{t}\}^\circ=(K_{\frac{1}{t}}(f^\circ))^\circ.
$$
\end{lemma}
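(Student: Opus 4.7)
The plan is to rewrite $H_t(f)$ in a clean form, prove convexity via a harmonic-mean interpolation, identify the polar $H_t(f)^\circ$ directly with $K_{1/t}(f^\circ)$, and close with the bipolar theorem. Writing $f = e^{-u}$, the hypothesis $\|f\|_\infty = f(0) = 1$ forces $u \geq 0$ with $u(0) = 0$, so with $\tau := \log t > 0$ and the substitution $s = \log(1/\lambda)$ I obtain the reparametrization
\begin{equation*}
H_t(f) = \left\{\frac{z}{\tau + s}\,:\, s > 0,\ u(z) \leq s\right\}.
\end{equation*}
Note also that $0 \in H_t(f)$ (take $z = 0$ and any $s > 0$).

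To prove convexity I would take $x_i = z_i/(\tau + s_i) \in H_t(f)$ with $u(z_i) \leq s_i$, $i = 1,2$, and $\theta \in (0,1)$, and introduce the harmonic-type mean $\mu > 0$ defined by
\begin{equation*}
\frac{1}{\mu} = \frac{1-\theta}{\tau + s_1} + \frac{\theta}{\tau + s_2}.
\end{equation*}
Setting $\alpha = \mu(1-\theta)/(\tau + s_1)$ and $\beta = \mu\theta/(\tau + s_2)$ gives $\alpha + \beta = 1$, and a direct computation yields $(1-\theta)x_1 + \theta x_2 = (\alpha z_1 + \beta z_2)/\mu$. The identity $\alpha(\tau + s_1) + \beta(\tau + s_2) = \mu$ then produces $\alpha s_1 + \beta s_2 = \mu - \tau$, so by convexity of $u$,
\begin{equation*}
u(\alpha z_1 + \beta z_2) \leq \alpha s_1 + \beta s_2 = \mu - \tau.
\end{equation*}
Since $\tau + s_i > \tau$ gives $\mu > \tau$, the pair $z = \alpha z_1 + \beta z_2$ and $s = \mu - \tau$ exhibits $(1-\theta)x_1 + \theta x_2$ as an element of $H_t(f)$.

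For the polar identity I would show $H_t(f)^\circ = K_{1/t}(f^\circ)$ straight from the reparametrization: $y \in H_t(f)^\circ$ means $\langle z, y\rangle \leq \tau + s$ whenever $u(z) \leq s$ and $s > 0$. For each $z \in \dom u$ the tightest such inequality (let $s \downarrow \max(u(z),0)$) is $\langle z, y\rangle - u(z) \leq \tau$; taking the supremum over $z$ converts this into $u^*(y) \leq \tau$, i.e.\ $f^\circ(y) \geq 1/t$. Since $H_t(f)$ is convex (by the previous step) and contains $0$, the bipolar theorem yields
\begin{equation*}
\overline{H_t(f)} = (H_t(f)^\circ)^\circ = (K_{1/t}(f^\circ))^\circ,
\end{equation*}
as required.

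The only delicate point I anticipate is handling the strict condition $s > 0$ when passing to the tightest inequality at a $z$ with $u(z) = 0$: one must verify that the limit $s \downarrow 0$ preserves the inequality $\langle z, y\rangle \leq \tau + s$, giving $\langle z, y\rangle \leq \tau = \tau + u(z)$, so that nothing is lost; this is routine. Apart from this, the argument is a clean marriage of the harmonic-mean interpolation for convexity with standard convex duality.
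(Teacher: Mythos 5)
Your argument is correct and follows essentially the same route as the paper: your $\alpha,\beta$ and $\mu$ in the convexity step are exactly the paper's $\lambda$ and $r+s_\theta$ in disguise, and your identification $H_t(f)^\circ=K_{1/t}(f^\circ)$ via $u^*(y)\le\tau$ followed by the bipolar theorem mirrors the paper's computation of $K_{1/t}(f^\circ)$ as $\bigcap_{s\ge0}\left(\tfrac{K_{e^{-s}}}{s+r}\right)^\circ$ followed by taking polars. The only cosmetic difference is whether one polarizes the union directly or writes the dual set as an intersection first.
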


\begin{proof} 

Calling $t=e^r$ for some $r>0$ and $\lambda=e^{-s}$ for $s\geq0$, we have that
$$
H_{e^{r}}(f)=\bigcup_{s\geq 0}\frac{K_{e^{-s}}(f)}{r+s}.
$$

We start showing that $H_{e^r}(f)$ is convex.
Let $x_1,x_2\in H_t(f)$ and $0\leq \theta\leq 1$. Then, there exist $s_1,s_2\geq 0$ such that $y_1=(r+s_1)x_1\in K_{e^{-s_1}}(f)$ and $y_2=(r+s_2)x_2\in K_{e^{-s_2}}(f)$. Letting $0\leq\lambda=\frac{\theta(r+s_1)}{(1-\theta)s_2+\theta s_1+r}\leq 1$ and $s_\theta=(1-\lambda)s_1+\lambda s_2$ we have that
$$
(r+s_\theta)[(1-\theta)x_1+\theta x_2]=(1-\lambda)y_1+\lambda y_2
$$
and then, since
$$
f((1-\lambda)y_1+\lambda y_2)\geq e^{-[(1-\lambda)s_1+\lambda s_2]}=e^{-s_\theta},
$$
we have that $(r+s_\theta)[(1-\theta)x_1+\theta x_2]\in K_{e^{-s_\theta}}$ or, equivalently, $(1-\theta)x_1+\theta x_2\in\frac{K_{e^{-s_\theta}}}{r+s_\theta}\subseteq H_{e^r}(f)$.

Let $u:\R^n\to[0,+\infty]$ be the convex function such that $f=\exp(-u)$. Note that
$$
f^\circ(x)\geq\frac{1}{t}\Leftrightarrow u^*(x)\leq r,
$$
where $u^*(x)$ is the Legendre transform of $u$
$$
u^*(x)=\sup_{y\in\R^n}(\langle x,y\rangle-u(y)).
$$
Therefore $u^*(x)\leq r$ if and only if for every $y\in\R^n$
$$
\langle x,y\rangle\leq u(y)+r,
$$
which happens if and only if for every $s\geq 0$ and every $y\in\{y\in\R^n\,:\,u(y)\leq s\}=K_{e^{-s}}(f)$ we have
$$
\langle x,y\rangle\leq s+r,
$$
which is equivalent to the fact that for every $s\geq0$, $x\in (s+r)K_{e^{-s}}^\circ$. Thus,
$$
K_{\frac{1}{t}}(f^\circ)=\{x\in\R^n\,:\,f^\circ(x)\geq\frac{1}{t}\}=\bigcap_{s\geq 0}(s+r)K_{e^{-s}}^\circ=\bigcap_{s\geq 0}\left(\frac{K_{e^{-s}}}{s+r}\right)^\circ
$$
and then
$$
(K_{\frac{1}{t}}(f^\circ))^\circ=\left(\bigcap_{s\geq 0}\left(\frac{K_{e^{-s}}}{s+r}\right)^\circ\right)^\circ=\overline{\textrm{conv}\left(\bigcup_{s\geq 0}\frac{K_{e^{-s}}}{s+r}\right)}=\overline{H_t(f)}.\hfill\qedhere
$$
\end{proof}

Using Lemmas \ref{lem:equiv} and \ref{lem:IntegralOfg} we can reformulate \textit{Problem \ref{problem1}} as follows: Given $f\in\mathcal{F}(\R^n)$ with $\Vert f\Vert_\infty=f(0)=1$, find $t_0\ge1$ and $g_0\in \H_d[x]$ such that
$H_{t_0}(f)\subset G_1(g_0)$ and
\[
t_0 |G_1(g_0)|
=
\inf t|G_1(g)|=\inf_{t\geq 1}\left(t\inf_{g\in\mathbb H_d(\mathbb R^n)}|G_1(g)|\right)
\]
where the infimum is taken among all $(t,g)$ verifying $t\ge1$, $g\in\H_d[x]$, and $H_t(f)\subset G_1(g)$.

Let us observe that for every $t> 1$,
 the infimum above over $g\in\H_d(\R^n)$ such that $H_t(f)\subset G_1(g)$  is a minimum.
 Indeed,  we may apply the minimization problem solved by Lasserre to $\overline{H_t(f)}$, the closure of ${H_t(f)}$, (which by Lemma \ref{lem:boundnessH_t} is compact), and get  $g_t\in\H_d(\R^n)$  the only polynomial  verifying
$$
H_t(f)\subset G_1(g_t)
$$
with minimum volume $|G_1(g_t)|$ among all $g\in\H_d(\R^n)$ such that $H_t(f)\subset G_1(g)$.

We can also apply the argument  to $H_1(f)$ (or its closure) if $H_1(f)$ is bounded (if it is unbounded, then
Lemma \ref{lem:boundnessH_t} implies that $|H_1(f)|=+\infty$, and hence it does not play any role in the minimization
problem). Let $g_1$ be the corresponding
polynomial to $H_1(f)$ (if it is bounded). Then, the infimum in \textit{Problem \ref{problem1}} can be rewritten as
\[
\inf_{t\geq 1}t|G_1(g_t)|.
\]
For any $t\ge1$, let $v(t)=|G_1(g_t)|$ (consider $v(1)=+\infty$ if $H_1(f)$ is unbounded) and $\phi(t)=tv(t)$ be the function to be minimized. Some properties for these functions are needed to solve the problem. Let us start with a technical lemma.

\begin{lemma}\label{lem:IneqLogConvex}
Let $t_0,t_1,d\geq 1$, $\theta\in[0,1]$, and $a\in(0,1]$. Then
$$
(1-\theta)\left(\log\tfrac{t_0}{a}\right)^d
+
\theta\left(\log\tfrac{t_1}{a}\right)^d
\leq
\left(\log\tfrac{t_\theta}{a}\right)^d,
$$
where $t_\theta$ is defined by the identity $(\log t_\theta)^d=(1-\theta)(\log t_0)^d+\theta (\log t_1)^d$.
\end{lemma}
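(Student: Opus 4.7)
The plan is to reduce the inequality to a single clean application of Minkowski's inequality. Since $a\in(0,1]$ and $t_0,t_1\geq 1$, setting $u_i=\log t_i\geq 0$ and $s=\log(1/a)\geq 0$ we have $\log(t_i/a)=u_i+s$ and $\log(t_\theta/a)=u_\theta+s$, where $u_\theta=\bigl((1-\theta)u_0^d+\theta u_1^d\bigr)^{1/d}$. The desired inequality then becomes
\[
(1-\theta)(u_0+s)^d+\theta(u_1+s)^d\;\leq\;(u_\theta+s)^d,
\]
an inequality among non-negative reals, which is what I would actually prove.

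The key observation is that both sides have the shape of weighted $\ell^d$-sums. Specifically, on the two-point space with weights $w_0=1-\theta$, $w_1=\theta$, consider the semi-norm $\|(a_0,a_1)\|_d=\bigl(w_0a_0^d+w_1a_1^d\bigr)^{1/d}$. Since $d\geq 1$, Minkowski's inequality applies and gives
\[
\|(u_0+s,\,u_1+s)\|_d\;\leq\;\|(u_0,u_1)\|_d+\|(s,s)\|_d.
\]
The right-hand side equals $u_\theta+s$ (using that $\|(s,s)\|_d=s$ because $w_0+w_1=1$), so taking $d$-th powers of both sides — legitimate since everything is non-negative — yields exactly the reformulated inequality.

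This gives a two-line proof: change variables to $(u_0,u_1,s)$, invoke Minkowski for a two-point weighted $\ell^d$-norm, raise to the $d$-th power. There is essentially no obstacle: the only ingredient is that $d\geq 1$, which is precisely the regime where the $\ell^d$-semi-norm satisfies the triangle inequality. As a sanity check, if one preferred to avoid quoting Minkowski, an equivalent route is to observe that $\phi(x)=(x^{1/d}+s)^d$ is concave on $[0,\infty)$ (a direct computation of $\phi''$ on $(0,\infty)$ shows $\phi''(x)=-\frac{s(d-1)}{d}(x^{1/d}+s)^{d-2}x^{(1-2d)/d}\leq 0$), and applying this concavity at the points $x=u_0^d$ and $x=u_1^d$ with weights $1-\theta,\theta$ yields the same inequality. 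Either path is routine and concludes the lemma.
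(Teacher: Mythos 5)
Your proof is correct, and it is a genuinely shorter route than the one in the paper. The paper also begins by dispatching $d=1$ and passing to the variables $b=\log(1/a)$ and $s_i=\log t_i$, but it then goes through an auxiliary function: it sets $F(a)=a\exp\bigl[(1-\theta)(\log\tfrac{t_0}{a})^d+\theta(\log\tfrac{t_1}{a})^d\bigr]^{1/d}$, notes $F(1)=t_\theta$, and reduces to showing that the logarithmic transform $G(b)=\bigl((1-\theta)(s_0+b)^d+\theta(s_1+b)^d\bigr)^{1/d}-b$ is decreasing on $[0,\infty)$. Differentiating $G$ and simplifying, the sign condition $G'\leq 0$ turns out to be exactly the power-mean inequality $\bigl((1-\theta)u_0+\theta u_1\bigr)^{d/(d-1)}\leq(1-\theta)u_0^{d/(d-1)}+\theta u_1^{d/(d-1)}$ with $u_i=(s_i+b)^{d-1}$, which follows from convexity of $u\mapsto u^{d/(d-1)}$. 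You bypass the monotonicity detour entirely by recognizing the reformulated inequality $(1-\theta)(u_0+s)^d+\theta(u_1+s)^d\leq(u_\theta+s)^d$ as an instance of Minkowski's inequality for the weighted $\ell^d$ semi-norm on a two-point space, or equivalently as Jensen's inequality for the concave function $\phi(x)=(x^{1/d}+s)^d$. Both approaches ultimately rest on the same one-dimensional convexity fact (your $\phi''\leq 0$ and the paper's power-mean inequality are two faces of it), but your packaging avoids the differentiation-of-$G$ computation and yields the cleaner derivation; the paper's version has the minor advantage of exhibiting along the way that the left-hand side, viewed as a function of $a$, is increasing, which is a slightly stronger and perhaps more geometrically suggestive statement than the pointwise inequality at $a$.
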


\begin{proof}
If $d=1$ the inequality in the statement is trivially an equality. Assume $d>1$. The inequality above can be reformulated as
\[
F(a)=a\exp\left[(1-\theta)\left(\log\tfrac{t_0}{a}\right)^d+\theta\left(\log\tfrac{t_1}{a}\right)^d\right]^{\frac1d}\leq t_\theta
\]
for any $a\in(0,1]$. Since $F(1)=t_{\theta}$, it is enough to prove that $F$ is increasing on $(0,1]$.
Indeed,
considering the change of variables $a=e^{-b}$, $t_i=e^{s_i}$, $i=0,1$,  $F$ is increasing if and only if $G$ is decreasing on $[0,+\infty)$, where
\[
G(b)=\log F(e^{-b})=
\left((1-\theta)(s_0+b)^d+\theta(s_1+b)^d\right)^\frac{1}{d}-b,
\hskip1cm
b\in[0,+\infty)
\]
for  any $s_0,s_1\geq 0$.
Its derivative equals
$$
G'(b)=
\left[\frac{\left((1-\theta)(s_0+b)^{d-1}+
\theta(s_1+b)^{d-1}\right)^{\frac{1}{d-1}}}
{\left((1-\theta)(s_0+b)^{d}+\theta(s_1+b)^{d}\right)^{\frac{1}{d}}}\right]^{d-1}-1.
$$
Letting $u_i=(s_i+b)^{d-1}$, $i=0,1$, then $G'(b)\leq 0$ rewrites as
\[
\left((1-\theta)u_0+\theta u_1\right)^{\frac{d}{d-1}} \leq (1-\theta)u_0^{\frac{d}{d-1}}+\theta u_1^{\frac{d}{d-1}},
\]
which is a consequence of the convexity of $u\mapsto u^{\frac{d}{d-1}}$.
\end{proof}

\begin{lemma}\label{lem:prop_of_v}
Let $f\in\mathcal F(\R^n)$ with $\Vert f\Vert_\infty=f(0)=1$ and $d\in\mathbb N$ even.
Then $v(t)$ is a decreasing function and $(\log t)^n v(t)$ is increasing in $t$.

Moreover, if $t_0,t_1>1$ and $\theta\in[0,1]$, then
$$
v(t_\theta)\le v(t_0)^{1-\theta}v(t_1)^\theta,
$$
where $(\log t_\theta)^d=(1-\theta)(\log t_0)^d+\theta (\log t_1)^d$.
\end{lemma}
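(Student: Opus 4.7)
The plan is to prove the three assertions in turn, in each case constructing an admissible polynomial for the Lasserre minimization associated to the relevant super-level set and then invoking the minimality of $g_t$. The monotonicity properties of $H_t(f)$ already established in the excerpt (Lemmas \ref{lem:equiv}, \ref{lem:reverseH_tContainment}) together with the key inequality in Lemma \ref{lem:IneqLogConvex} do most of the work.

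For $v$ decreasing: $H_t(f)$ is decreasing in $t$ by definition (as $\log(t/\lambda)$ is increasing in $t$), so whenever $1\le t_0\le t_1$ one has $H_{t_1}(f)\subset H_{t_0}(f)\subset G_1(g_{t_0})$. Thus $g_{t_0}$ is admissible in the minimization that defines $g_{t_1}$, and its minimality gives $v(t_1)\le v(t_0)$. For the monotonicity of $(\log t)^n v(t)$ on $t>1$, I would use Lemma \ref{lem:reverseH_tContainment}: if $1<t_0<t_1$ and $x\in H_{t_0}(f)$ then $(\log t_1/\log t_0)\,x\in H_{t_1}(f)\subset G_1(g_{t_1})$, and by $d$-homogeneity this is $g_{t_1}(x)\le (\log t_0/\log t_1)^d$. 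Hence $H_{t_0}(f)\subset G_1(c\,g_{t_1})$ with $c=(\log t_1/\log t_0)^d$, and minimality of $g_{t_0}$ yields
$$v(t_0)\le |G_1(c\,g_{t_1})|=c^{-n/d}v(t_1)=\Bigl(\tfrac{\log t_1}{\log t_0}\Bigr)^n v(t_1),$$
which rearranges to the claim.

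For the log-convexity, the natural candidate is $g_\theta:=(1-\theta)g_{t_0}+\theta g_{t_1}\in\H_d(\R^n)$. I would first prove the inclusion $H_{t_\theta}(f)\subset G_1(g_\theta)$, so that $v(t_\theta)\le |G_1(g_\theta)|$ by the minimality of $g_{t_\theta}$. To do so, fix $x\in H_{t_\theta}(f)$ and choose $a\in(0,1)$ with $s_\theta x\in K_a(f)$, where $s_\theta=\log(t_\theta/a)$. Lemma \ref{lem:equiv} gives $f(y)\le t_i\exp(-g_{t_i}(y)^{1/d})$; applying this at $y=s_\theta x$ and using the $d$-homogeneity of $g_{t_i}$ leads to
$$g_{t_i}(x)\le \Bigl(\tfrac{\log(t_i/a)}{\log(t_\theta/a)}\Bigr)^d,\qquad i=0,1.$$
Taking the $\theta$-convex combination and applying Lemma \ref{lem:IneqLogConvex} with $t_0,t_1$ and $a$ yields $g_\theta(x)\le 1$, as desired. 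Finally, Lemma \ref{lem:IntegralOfg} and Hölder's inequality with conjugate exponents $1/(1-\theta),1/\theta$ applied to the pointwise identity $e^{-g_\theta}=(e^{-g_{t_0}})^{1-\theta}(e^{-g_{t_1}})^{\theta}$ give
$$|G_1(g_\theta)|=\tfrac{1}{\Gamma(n/d+1)}\int_{\R^n} e^{-g_\theta}\,dx\le v(t_0)^{1-\theta}v(t_1)^{\theta}.$$

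The main subtlety I anticipate is the last step: the excerpt only states strict convexity of $g\mapsto |G_1(g)|$ via Proposition \ref{prop:injective}, whereas the argument requires the stronger \emph{log}-convexity. This is not automatic from convexity but follows directly from the integral representation in Lemma \ref{lem:IntegralOfg} via Hölder, so it is a minor additional observation. The other nontrivial point is recognising that Lemma \ref{lem:IneqLogConvex} has been set up exactly to furnish the transition from the bounds on $g_{t_i}(x)$ to $g_\theta(x)\le 1$; once this alignment is seen, the three parts of the lemma assemble cleanly.
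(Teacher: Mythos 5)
Your proposal is correct and follows essentially the same route as the paper: the paper also derives $H_{t_\theta}(f)\subset G_1((1-\theta)g_{t_0}+\theta g_{t_1})$ from the pointwise bounds $g_{t_i}(x)\le(\log(t_i/f(x)))^d$ and Lemma \ref{lem:IneqLogConvex} (with $a=f(x)$), then applies the minimality of $g_{t_\theta}$ together with Lemma \ref{lem:IntegralOfg} and H\"older's inequality exactly as you do. Your version establishes the set inclusion directly by taking $x\in H_{t_\theta}(f)$ and the level $a$ with $\log(t_\theta/a)\,x\in K_a(f)$, whereas the paper first proves the functional inequality $f\le t_\theta\exp(-g_\theta^{1/d})$ and then invokes Lemma \ref{lem:equiv}; these are the same argument read from the two equivalent sides of Lemma \ref{lem:equiv}.

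One small slip in the monotonicity of $(\log t)^n v(t)$: from Lemma \ref{lem:reverseH_tContainment} the inclusion you get for $1<t_0<t_1$ and $x\in H_{t_0}(f)$ is $(\log t_0/\log t_1)\,x\in H_{t_1}(f)$ (the factor is $<1$, since $H_{t_1}(f)$ is the smaller set), not $(\log t_1/\log t_0)\,x\in H_{t_1}(f)$. This yields $g_{t_1}(x)\le(\log t_1/\log t_0)^d$, so the correct dilation is $c=(\log t_0/\log t_1)^d$ giving $H_{t_0}(f)\subset G_1(c\,g_{t_1})$, whence $v(t_0)\le c^{-n/d}v(t_1)=(\log t_1/\log t_0)^n v(t_1)$. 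You reached this final inequality, but via two compensating sign swaps (the ratio and then $c^{-n/d}$); worth straightening before writing it up.
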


\begin{proof}
$v$ is decreasing by definition.
On the other hand, taking volumes in the inclusion given in Lemma \ref{lem:reverseH_tContainment}, for any $1<t_0< t_1$
$$
(\log t_0)^n  v(t_0)\le(\log t_1)^n v(t_1).
$$
Now, for any $i=0,1$, and for every $x\in\R^n$,
$$
f(x)\le t_i \exp(-g_{t_i}(x)^{1/d}).
$$
Then, if $f(x)\not=0$,
$$
g_{t_i}(x)\le \left(\log\tfrac{t_i}{f(x)}\right)^d
$$
for $i=0,1$. Since $f(x)\in(0,1]$ and $t_i\geq 1$, then by Lemma \ref{lem:IneqLogConvex} we get that
\begin{equation*}
(1-\theta)g_{t_0}(x)+\theta g_{t_1}(x) \le
(1-\theta)\left(\log\tfrac{t_0}{f(x)}\right)^{d}+\theta\left(\log\tfrac{t_1}{f(x)}\right)^d
\le
\left(\log\tfrac{t_\theta}{f(x)}\right)^d,
\end{equation*}
and  by Lemma \ref{lem:equiv}, $H_{t_\theta}(f)\subset G_1((1-\theta)g_{t_0}+\theta g_{t_1})$.
Since $(1-\theta) g_{t_0}+ \theta  g_{t_1}\in \H_d(\R^n)$,
the minimization property of $g_{t_\theta}$ implies that
$ |G_1(g_{t_\theta})|\le |G_1((1-\theta) g_{t_0}+ \theta  g_{t_1})|$, and using Lemma \ref{lem:IntegralOfg} and H\"older's inequality

\begin{equation}\label{convexity}
\begin{aligned}
 |G_1(g_{t_\theta})|
 &\le
 |G_1((1-\theta) g_{t_0}+ \theta  g_{t_1})|
 \\
 &=\Gamma(\tfrac{n}{d}+1)^{-1}\int_{\R^n} \exp(-((1-\theta) g_{t_0}(x)+ \theta  g_{t_1}(x)))\,dx
 \\
 &\le
 \left(\Gamma(\tfrac{n}{d}+1)^{-1}\int_{\R^n} \exp(-g_{t_0}(x))\,dx\right)^{1-\theta}
 \left(\Gamma(\tfrac{n}{d}+1)^{-1}\int_{\R^n} \exp(-g_{t_1}(x))\,dx\right)^{\theta}
 \\
 &=
 |G_1(g_{t_0})|^{1-\theta}|G_1(g_{t_1})|^{\theta}.\qedhere
\end{aligned}
\end{equation}
\end{proof}

Now we can prove the first main result.
\begin{proof}[Proof of Theorem \ref{thm:prob1}]
By Lemma  \ref{lem:prop_of_v}, the function $s\in (0,+\infty)\mapsto \log v\left(e^{s^{1/d}}\right)$ is a convex function. This implies that $v$ is a continuous function on $(1,+\infty)$.
We will also prove that
\begin{equation}\label{continuity_1}
\lim_{t\to1^+}v(t)=v(1).
\end{equation}

Recall that $H_t(f)$ is decreasing in $t\geq 1$. That implies that there exists
$$
\lim_{t\to1^+}|H_t(f)|\le|H_1(f)|.
$$
We have that   $\mu{H_1(f)}\subset{\bigcup_{t>1}H_t(f)}$ for any $\mu<1$. Indeed,  $x\in H_1(f)$ if and only if $f(\log(1/\lambda)x)\ge\lambda$ for some $\lambda\in(0,1)$. Take $t>1$ so that $\mu\log(t/\lambda)=\log(1/\lambda)$. Then $\mu x\in\log (t/\lambda)^{-1}K_\lambda(f)\subset H_t(f)$.

The Monotone Convergence Theorem ensures that
$\mu^n|H_1(f)|\le \lim_{t\rightarrow 1^+}|H_t(f)|$ (even if $|H_1(f)|=\infty$) for any $\mu<1$. Then
$$
\lim_{t\to1^+}|H_t(f)|=
|H_1(f)|.
$$

First, assume that $H_1(f)$ is bounded.
Using the minimization property for $|G_1(g_t)|$, we have that $(|G_1(g_t)|)_{t>1}$ is decreasing, so there exists $\lim_{t\to1^+}|G_1(g_t)|\le |G_1(g_1)|$.

Using \eqref{eq:set_touching} (and repeating some of the contact points and the coefficients if necessary), for any $t\ge1$, there are
$y_1^{(t)},\dots,y_{h_d(n)}^{(t)}\in H_t(f)$, $\lambda_1^{(t)},\dots,\lambda_{h_d(n)}^{(t)}\ge0$,
such that $g_t(y_i^{(t)})=1$ for $i=1,\dots,h_d(n)$, and
\begin{equation}\label{eq:contact1}
\int_{\mathbb R^n}x^\alpha e^{-g_t(x)}dx=\sum_{i=1}^{h_d(n)}\lambda_i^{(t)}(y_i^{(t)})^\alpha
\end{equation}
for every $\alpha\in\N_d^n$. Moreover, using the trace identity and Lemma \ref{identity_r},
$$
\frac{n}{d}\Gamma(\tfrac{n}{d}+1)|G_1(g_t)|=\int_{\mathbb R^n}g_t(x)e^{-g_t(x)}dx=\sum_{i=1}^{h_d(n)}\lambda_i^{(t)}.
$$
Using that $H_t(f)\subset H_1(f)$ and subsequently, $|G_1(g_t)|\le |G_1(g_1)|$, all coefficients $\lambda_i^{(t)}$ are uniformly bounded by
$$
0\le
\lambda_i^{(t)}\le
\frac{n}{d}\Gamma(\tfrac{n}{d}+1)|G_1(g_1)|
$$
and all the vectors $y_i^{(t)}$ lie in the same bounded set $H_1(f)$. These uniformly bounding conditions together with the set of equalities in \eqref{eq:contact1}, for every $\alpha\in\N^n_d$,
 imply that there exists a compact set $\Omega\subset\R^{h_d(n)}$ such that for every $t\geq 1$, $\Phi(g_t)\in \Omega$, where $\Phi$ is the map defined in Proposition \ref{prop:injective}. Using Proposition \ref{prop:injective}, we have that the coefficients of all $g_t$ are uniformly bounded. Thus, taking a sequence $(t_k)$ converging to $1$, considering the sequence $g_{t_k}$, and passing to a convergent subsequence, we can construct a polynomial $g_0\in\H_d(\R^n)$ whose coefficients are the limit of the coefficients of such a subsequence of $(g_{t_k})$. Using that $H_t(f)\subset G_1(g_t)$ for any $t>1$ and Lemma \ref{lem:equiv}, and taking limit, we get that $H_1(f) \subset G_1(g_0)$. Using that the miminizing property defining $g_1$, we have $|G_1(g_1)|\le |G_1(g_0)|$, and using Lemma \ref{lem:IntegralOfg} and Fatou's lemma, $|G_1(g_0)|\le\lim_{t\to1^+}|G_1(g_t)|$ (since this limit exists).
Then $|G_1(g_1)|\leq \lim_{t\to1^+}|G_1(g_t)|$ and
$$
\lim_{t\to1^+}|G_1(g_t)|=|G_1(g_1)|,
$$
as desired. Note that, using the equalities \eqref{eq:contact1}, and taking again a subsequence, we get the same equalities for $g_0$, for some coefficients and contact points in $H_1(f)$. Since these equalities characterize $g_1$, we have $g_0=g_1$.

If $H_1(f)$ is unbounded, then $|H_1(f)|=+\infty$ by Lemma \ref{lem:boundnessH_t}. The Monotone Convergence Theorem ensures again that
$\lim_{t\rightarrow 1+}|H_t(f)|=+\infty$. Since $H_t(f)\subset G_1(g_t)$,
$$
\lim_{t\rightarrow 1+}|G_1(g_t)|=+\infty
$$
and the proof of \eqref{continuity_1} is completed.

Using Lemma \ref{lem:prop_of_v}, the function $\phi(t)=\frac{t}{(\log t)^n}{(\log t)^n}v(t)$ is the product of two positive increasing functions in $[e^n,+\infty)$.
If $H_1(f)$ is bounded, $\phi$ attains its minimum in $[1,e^n]$ by continuity.
If $H_1(f)$ is unbounded, then $\lim_{t\to1^+}\phi(t)=+\infty$ and so, $\phi$ attains its minimum in $(1,e^n]$ by continuity.
In both cases, this is the minimum of $\phi$ in $[1,+\infty)$.
\end{proof}


The end of this section is devoted to showing several examples where \textit{Problem \ref{problem1}} can be explicitly solved.

\begin{example}\label{examples1}
In the following examples, $K\subseteq\R^n$ is a convex body with $0\in K$ and $g\in\mathbb H_d(\R^n)$ is the optimal polynomial verifying \eqref{ineq:set} for the given convex body $K$ given in \cite{lasserre}.
\begin{enumerate}
\item \label{ex:inf_at_1}
Let $f(x)=e^{-\|x\|_K}$. Then
$K_\lambda(f)=(\log(1/\lambda))K$,
$
H_t(f)=
\interior(K)
$
 for $t>1$, and
$
H_1(f)=
K.
$
Therefore $g_t=g$ for every $t\geq 1$. Thus
$\phi(t)=t|G_1(g)|$ and then $\min_{t\geq 1}\phi(t)=\phi(1).$

\item
More generally, for $\alpha>1$ let $f(x)=e^{-\|x\|_K^\alpha}$. Then $K_\lambda(f)=(\log(1/\lambda))^{1/\alpha}K$, $H_t(f)=
\alpha^{-1/\alpha}(\alpha'\log t)^{-1/\alpha'}K
$  for $t>1$ ($\alpha^{-1}+(\alpha')^{-1}=1$)
and $H_1(f)=
\R^n$.
 Then $g_t=\alpha^{d/\alpha}(\alpha'\log t)^{d/\alpha'} g$ for every $t> 1$ ($g_1$ is undefined since $H_1(f)$ is unbounded). Thus
$\phi(t)=t\alpha^{-n/\alpha}(\alpha'\log t)^{-n/\alpha'}|G_1(g)|$ and then $\min_{t> 1}\phi(t)=\phi(e^{n/\alpha'})=(e/n)^{n/\alpha'}\alpha^{-n/\alpha}|G_1(g)|.$

\item \label{ex:infty_at_1}
Let $f=\chi_K$. Then $K_\lambda(f)=K$, $H_t(f)=
(\log t)^{-1} K$  for $t>1$, and $H_1(f)=\R^n$.
Therefore $g_t=(\log t)^{d}g$ for $t>1$ ($g_1$ is undefined since $H_1(f)$ is unbounded).
Thus
$\phi(t)=t(\log t)^{-n}|G_1(g)|$ and
$\min_{t> 1}\phi(t)=\phi(e^n)=  (e/n)^{n} |G_1(g)|$.

\item Let
$$
f(x)=
\begin{cases} 1 & x\in K,
\\
e^{1-\Vert x\Vert_K} & \text{ otherwise.}
\end{cases}
$$
Then $f\in\mathcal F(\mathbb R^n)$. Moreover,
$$
K_\lambda(f)=
\begin{cases}
\left(1+\log\frac1\lambda\right)K & 0<\lambda<1
\\
K &  \lambda=1,\end{cases}
$$
and thus
$$
H_t(f)=
\begin{cases}
\frac{1}{\log t}K & 1\leq t\leq e
\\
\interior(K) & t>e. \end{cases}
$$
Hence,
$$
g_t=
\begin{cases}
(\log t)^d g &  1\leq t\leq e
\\
g &  t>e\end{cases}
$$
and thus
$$
\phi(t)=\begin{cases}
\frac{t}{(\log t)^n}|G_1(g)| & 1\leq t\leq e
\\
t|G_1(g)| & t>e\end{cases}
$$
is not differentiable at the point $t=e$, precisely where it attains the  minimum $\min_{t\geq 1}\phi(t)=\phi(e)=e|G_1(g)|$.
\end{enumerate}\end{example}

\section{A new approach to approximate log-concave functions by polynomials}\label{section:prob2}

With the purpose of getting uniqueness for the optimal polynomial, we pose \textit{Problem \ref{problem2}}
as a similar minimization problem, where the polynomial exponent $\tfrac1d$ is dropped
in \eqref{ineq:func_poly1}, turning into \eqref{ineq:func_poly2}.

In order to solve \textit{Problem \ref{problem2}}, for $t\ge1$,
we introduce
$$
\widehat H_t(f)
=\bigcup_{\lambda\in(0,1)}(\log(t/\lambda))^{-1/d}K_\lambda(f).
$$

This case is not a generalization of Lasserre's problem, and moreover we can not assure $\widehat H_t(f)$ to be bounded, as in \textit{Problem \ref{problem1}} (as one can see by taking  $f(x)=e^{-\|x\|_2}$). In fact, the following example shows the existence of a log-concave function $f$ for which $\widehat H_1(f)$ is an unbounded set with finite volume.

\begin{example}\label{example:unboundedfinite}
Let us consider $f(x)=(1-\|(x_1,\dots,x_{n-1},0)\|_\infty)\chi_{[0,1]^n}(x)$. Notice that $f$ is integrable and  concave in its support, and thus, also log-concave. Moreover, $K_\lambda(f)=[0,1-\lambda]^{n-1}\times[0,1]$ for every $\lambda\in(0,1)$. Thus
\[
\widehat{H}_1(f)=\bigcup_{0<\lambda<1}
\left[0,(1-\lambda)(\log1/\lambda)^{-1/d}\right]^{n-1}\times\left[0,(\log1/\lambda)^{-1/d}\right].
\]
Notice that the terms of the union when $\lambda\rightarrow 1^-$ contain points with arbitrarily large norm, thus  $\widehat{H}_1(f)$ is unbounded.

The function $h(\lambda)=(1-\lambda)(\log(1/\lambda))^{-1/d}$ fulfills
$
h'(\lambda)=-\frac{d\lambda\log(1/\lambda)+\lambda-1}{d\lambda\left(\log1/{\lambda}\right)^{1+1/d}}.
$

Let  $\lambda_d\in(0,1)$ be the unique root of the equation $h'(\lambda)=0$ in $(0,1)$.
Then, $h$ is increasing in $(0,\lambda_d)$ and decreasing in $(\lambda_d,1)$. Therefore
\[
\begin{split}
& \widehat{H}_1(f)=
\left[0,(1-\lambda_d)(\log 1/\lambda_d)^{-1/d}\right]^{n-1}
\times
\left[0,(\log1/\lambda_d)^{-1/d}\right]\cup
\\
&\bigcup_{\lambda_d\le\lambda\le1}
\left\{
(x_1,\dots,x_{n-1},(\log 1/\lambda)^{-1/d})
:
0\leq x_i\leq(1-\lambda)(\log 1/\lambda)^{-1/d},\,
1\le i\le n-1
\right\}.
\end{split}
\]
Since the first term in the union above is bounded, 
 $|\widehat{H}_1(f)|<+\infty$  if and only if the second term in the union has finite volume. Letting $\mu=(\log(1/\lambda))^{-1/d}$,  that term  becomes
\[
\{(x_1,\dots,x_{n-1},\mu):0\leq x_i\leq \mu(1-e^{-\mu^{-d}}),\,\mu\geq \mu_d\}
\]
where $\mu_d=(\log(1/\lambda_d))^{-1/d}$. Using Fubini's formula,  its volume is
\[
\int_{\mu_d}^{+\infty}\mu^{n-1}\left(1-e^{-\mu^{-d}}\right)^{n-1}d\mu
= \int_{0}^{1/\mu_d}\frac{\left(1-e^{-\theta^d}\right)^{n-1}}{\theta^{n+1}}d\theta,
\]
This last integral converges if and only if $(n-1)(d-1)>1$, which 
turns out to be always true except in the case $n=d=2$. In this last case, $\widehat{H}_1(f)$ is unbounded with infinite volume. 
Otherwise $|\widehat{H}_1(f)|<+\infty$, as desired.

Note also that $\widehat H_1(f)$ is not convex, in contrast to $H_1(f)$ (see Lemma \ref{lem:polar}),
while $f$ is concave on its compact support. 
Regarding \textit{Problem \ref{problem2}}, it would be interesting to solve Lasserre's problem for these type of sets $\widehat H_1(f)$ (bounded or not).
\end{example}

For that reason, we will restrict the study of \textit{Problem \ref{problem2}} to $\mathcal B(\R^n)$, the set of all log-concave functions for which
$\widehat H_1(f)$ is bounded. Note that $\mathcal B(\R^n)\subset \mathcal F(\R^n)$. Since $K_\lambda(f)$ are convex sets that contain the origin, $\widehat H_t(f)$ is decreasing in $t\ge1$, so the boundedness of  $\widehat H_t(f)$ for any $t\ge1$ is guaranteed by the condition $f\in\mathcal B(\R^n)$.


Similar lemmas to those given in Section \ref{section:prob1} are now provided. The proofs follow the same ideas as in the previous section.

\begin{lemma}\label{lem:equiv2}
Let $f:\R^n\to[0,+\infty)$ be a log-concave function with $\Vert f\Vert_\infty=f(0)=1$,
$d\in\mathbb N$ even,
$g\in\mathbb H_d(\mathbb R^n)$ and $t\ge1$.
The following are equivalent:
\begin{enumerate}
\item $f(x)\leq t\exp(-g(x))$ for all $x\in\R^n$.
\item $\widehat H_t(f)\subset G_1(g)$.
\end{enumerate}
\end{lemma}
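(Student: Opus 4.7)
\textbf{Proof plan for Lemma \ref{lem:equiv2}.} The statement is the natural analogue of Lemma \ref{lem:equiv} after the change of exponent from $g^{1/d}$ to $g$ in the pointwise bound, and the parallel change from $\log(t/\lambda)^{-1}$ to $\log(t/\lambda)^{-1/d}$ in the definition of the scaled set. My plan is to mimic the proof of Lemma \ref{lem:equiv} line by line, using $d$-homogeneity of $g$ to produce the matching exponent.

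For (i) $\Rightarrow$ (ii), I take an arbitrary $x\in\widehat H_t(f)$, so that there is some $\lambda\in(0,1)$ with $(\log(t/\lambda))^{1/d}x\in K_\lambda(f)$, i.e.\ $f\bigl((\log(t/\lambda))^{1/d}x\bigr)\ge\lambda$. Applying (i) at this point and using $g\bigl(sx\bigr)=s^d g(x)$ with $s=(\log(t/\lambda))^{1/d}$, I obtain
\[
\lambda\le t\exp\bigl(-g((\log(t/\lambda))^{1/d}x)\bigr)=t\exp\bigl(-\log(t/\lambda)\,g(x)\bigr),
\]
which simplifies to $g(x)\le 1$, i.e.\ $x\in G_1(g)$.

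For (ii) $\Rightarrow$ (i), I fix $x\in\R^n$ and split according to the value of $f(x)$. If $f(x)<1$, then setting $\lambda=f(x)$ gives $(\log(t/\lambda))^{-1/d}x\in\widehat H_t(f)\subset G_1(g)$; applying $d$-homogeneity yields $(\log(t/\lambda))^{-1}g(x)\le 1$, i.e.\ $g(x)\le\log(t/f(x))$, which rearranges to $f(x)\le t\exp(-g(x))$. If $f(x)=1$ and $t>1$, then for every $\lambda\in(0,1)$ I have $x\in K_\lambda(f)$, hence $(\log(t/\lambda))^{-1/d}x\in\widehat H_t(f)\subset G_1(g)$ and therefore $g(x)\le\log(t/\lambda)$; letting $\lambda\to 1^-$ gives $g(x)\le\log t$, which is equivalent to $1=f(x)\le t\exp(-g(x))$. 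Finally, if $f(x)=t=1$, I use that $\widehat H_s(f)$ is decreasing in $s\ge 1$ (since each $K_\lambda(f)$ is star-shaped with respect to the origin), so $\widehat H_s(f)\subset\widehat H_1(f)\subset G_1(g)$ for every $s>1$; applying the previous case to each $s>1$ gives $g(x)\le\log s$ for all $s>1$, hence $g(x)\le 0$ and therefore $f(x)=1\le\exp(-g(x))=t\exp(-g(x))$.

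There is no substantive obstacle: the only technical point is correctly handling the boundary case $f(x)=1$ (and separately $t=1$), and the argument mirrors the one already given for Lemma \ref{lem:equiv}. I expect the proof to be essentially a transcription with the exponents adjusted, so it can be written in a few lines.
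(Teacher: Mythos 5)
Your proof is correct and follows exactly the intended route: the paper states that the proofs in Section 4 "follow the same ideas as in the previous section," and your argument is indeed the line-by-line transcription of the proof of Lemma \ref{lem:equiv}, with the $d$-homogeneity $g(sx)=s^dg(x)$ supplying the matching exponent and the three-case split ($f(x)<1$, $f(x)=1<t$, $f(x)=t=1$) handled identically.
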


\begin{lemma}\label{lem:log t d increasing}
Let $f:\R^n\to[0,+\infty)$ be a log-concave function with $\Vert f\Vert_\infty=f(0)=1$ and
$d\in\mathbb N$ even. Then for every $1<t_0<t_1$ we have that
$$
(\log t_0)^\frac1d \widehat H_{t_0}(f)
\subset
(\log t_1)^\frac1d \widehat H_{t_1}(f).
$$
\end{lemma}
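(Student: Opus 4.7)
The plan is to reduce the claimed set inclusion, termwise over $\lambda\in(0,1)$, to a scalar inequality between the dilation factors, exactly in the spirit of the proof of Lemma \ref{lem:reverseH_tContainment}. By definition,
\[
(\log t_i)^{1/d}\widehat H_{t_i}(f)
=\bigcup_{\lambda\in(0,1)}
\left(\frac{\log t_i}{\log(t_i/\lambda)}\right)^{\!1/d}K_\lambda(f),\qquad i=0,1.
\]
Since $f$ is log-concave and $f(0)=1\geq\lambda$ for every $\lambda\in(0,1)$, each super-level set $K_\lambda(f)$ is a convex set containing the origin; in particular it is star-shaped with respect to $0$, so $\alpha_1 K_\lambda(f)\subset\alpha_2 K_\lambda(f)$ whenever $0\leq\alpha_1\leq\alpha_2$. (This is all the structure of $f$ we will use, paralleling the remark following Lemma \ref{lem:reverseH_tContainment}.)

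Given this monotonicity in the dilation factor, the desired inclusion follows termwise once we show that for each fixed $\lambda\in(0,1)$,
\[
\frac{\log t_0}{\log(t_0/\lambda)}\leq \frac{\log t_1}{\log(t_1/\lambda)}.
\]
Writing $a=\log t_0$, $b=\log t_1$, and $c=\log(1/\lambda)$, we have $0<a<b$ and $c>0$, and $\log(t_i/\lambda)=\log t_i+c$, so the inequality reduces to
\[
\frac{a}{a+c}\leq \frac{b}{b+c},
\]
which upon cross-multiplying (both denominators are positive) is equivalent to $ac\leq bc$, true since $a\leq b$ and $c\geq 0$. Equivalently, one observes that the function $u\mapsto u/(u+c)$ is increasing on $(0,\infty)$ for each fixed $c>0$. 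Taking $d$th roots (recall $d\in\mathbb{N}$ is even and positive, so $x\mapsto x^{1/d}$ is increasing on $[0,\infty)$) preserves the inequality, yielding the desired comparison between the dilation factors.

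Taking the union over $\lambda\in(0,1)$ of the resulting inclusions of scaled copies of $K_\lambda(f)$ gives $(\log t_0)^{1/d}\widehat H_{t_0}(f)\subset (\log t_1)^{1/d}\widehat H_{t_1}(f)$. There is no real obstacle; the only subtlety is to write the dilation factor as $\log t_i/\log(t_i/\lambda)$ so that the monotonicity in $t_i$ becomes transparent, and to remember that only star-shapedness of the super-level sets (not convexity or full log-concavity) is actually used, so that the same proof would work verbatim for any quasi-concave $f$ with $f(0)=\|f\|_\infty=1$.
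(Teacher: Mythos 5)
Your proof is correct and is essentially the argument the paper has in mind: the paper states that Lemma~\ref{lem:log t d increasing} "follows the same ideas" as Lemma~\ref{lem:reverseH_tContainment}, namely rewriting the scaled $\widehat H_{t_i}(f)$ as a union over $\lambda$, comparing the dilation factors termwise via an elementary scalar inequality, and invoking the star-shapedness of the super-level sets $K_\lambda(f)$ with respect to the origin. Your formulation — normalizing both sides and reducing to $\frac{a}{a+c}\le\frac{b}{b+c}$ before taking $d$th roots — is a clean and equivalent way of presenting that same reduction.
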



\begin{rmk} In these two lemmas we have not assumed $f\in\mathcal{B}(\R^n)$ or the integrability of $f$. Moreover, in Lemma \ref{lem:equiv2}  the log-concavity of $f$ is only used in the case $t=1$; indeed, in both lemmas the only fact needed is that $\alpha_1 K_\lambda(f)\subset \alpha_2 K_\lambda(f)$ for any $0<\alpha_1\le\alpha_2$, which is equivalent to the fact that $K_\lambda(f)$ is star-shaped with respect to the origin.
Finally, the inclusions in Lemma \ref{lem:log t d increasing} above are sharp (take $f=\chi_K$ for a convex body $K\subset\R^n$).
\end{rmk}



For every $t\ge 1$,
the minimum in \textit{Problem \ref{problem2}} over $g\in\H_d(\R^n)$ such that $\widehat H_t(f)\subset G_1(g)$  is attained.
Indeed, we may apply the minimization problem solved by Lasserre to $\widehat H_t(f)$ (actually its closure) and get $\widehat g_t\in\H_d(\R^n)$  the only polynomial  verifying
$$
\widehat H_t(f)\subset G_1(\widehat g_t)
$$
with minimum volume $|G_1(\widehat g_t)|$ among all $g\in\H_d(\R^n)$ such that $\widehat H_t(f)\subset G_1(g)$.

Then, the infimum in \textit{Problem \ref{problem2}} can be rewritten as
\[
\inf_{t\geq 1}t|G_1(\widehat g_t)|.
\]
For any $t\ge1$, let $\widehat v(t)=|G_1(\widehat g_t)|$  and $\widehat \phi(t)=t\widehat v(t)$ be the function to be minimized.

\begin{lemma}\label{lem:propofv2}
Let $f\in\mathcal B(\R^n)$ with $\Vert f\Vert_\infty=f(0)=1$ and  $d\in\mathbb N$ even.
Then $\widehat v$ is a decreasing function and $(\log t)^\frac{n}{d} \widehat v(t)$ is increasing on $t$.
As a consequence,  $\widehat \varphi$ is increasing on $[ e^{\frac{n}{d}},+\infty)$.
\end{lemma}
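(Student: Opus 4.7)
The plan is to mimic the proof of Lemma \ref{lem:prop_of_v}, using Lemma \ref{lem:log t d increasing} as the replacement for Lemma \ref{lem:reverseH_tContainment}, together with $d$-homogeneity of polynomials in $\H_d(\R^n)$.

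First I would dispatch the monotonicity of $\widehat v$. Since $\widehat H_t(f)$ is decreasing in $t$, any $g\in\H_d(\R^n)$ satisfying $\widehat H_{t_0}(f)\subset G_1(g)$ for some $t_0\ge 1$ also satisfies $\widehat H_{t_1}(f)\subset G_1(g)$ for every $t_1\ge t_0$. Taking the infimum of $|G_1(g)|$ over these larger admissible sets can only decrease the optimum, hence $\widehat v(t_1)\le \widehat v(t_0)$.

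Next I would show that $(\log t)^{n/d}\widehat v(t)$ is increasing. Fix $1<t_0<t_1$. By Lemma \ref{lem:log t d increasing},
\[
(\log t_0)^{1/d}\widehat H_{t_0}(f)\subset (\log t_1)^{1/d}\widehat H_{t_1}(f)\subset (\log t_1)^{1/d}G_1(\widehat g_{t_1}).
\]
Using that $\widehat g_{t_1}$ is $d$-homogeneous, for any $\lambda>0$ we have $\lambda G_1(\widehat g_{t_1})=G_1(\lambda^{-d}\widehat g_{t_1})$; applying this with $\lambda=(\log t_1/\log t_0)^{1/d}$ yields
\[
\widehat H_{t_0}(f)\subset G_1\!\left(\tfrac{\log t_0}{\log t_1}\,\widehat g_{t_1}\right).
\]
Since $\tfrac{\log t_0}{\log t_1}\widehat g_{t_1}\in\H_d(\R^n)$, the minimality property defining $\widehat g_{t_0}$ gives
\[
|G_1(\widehat g_{t_0})|\le \left|G_1\!\left(\tfrac{\log t_0}{\log t_1}\widehat g_{t_1}\right)\right|=\left(\tfrac{\log t_1}{\log t_0}\right)^{n/d}|G_1(\widehat g_{t_1})|,
\]
which is the desired monotonicity $(\log t_0)^{n/d}\widehat v(t_0)\le (\log t_1)^{n/d}\widehat v(t_1)$.

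Finally, for the consequence, I factor $\widehat \phi(t)=\tfrac{t}{(\log t)^{n/d}}\cdot (\log t)^{n/d}\widehat v(t)$. A direct calculation (differentiating $\log t-\tfrac{n}{d}\log\log t$) shows $t\mapsto t/(\log t)^{n/d}$ is positive and increasing on $[e^{n/d},+\infty)$, and the second factor is positive and increasing by the previous step; hence $\widehat \phi$ is increasing on this interval. I do not expect any serious obstacle: the only subtle point is the rescaling step, but it is a one-line consequence of the $d$-homogeneity of $\widehat g_{t_1}$, exactly as in Lemma \ref{lem:prop_of_v}.
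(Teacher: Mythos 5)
Your proposal is correct and follows exactly the route the paper intends: the paper states only that "the proofs follow the same ideas as in the previous section," and your argument is the transparent adaptation of the proof of Lemma \ref{lem:prop_of_v}, with Lemma \ref{lem:log t d increasing} replacing Lemma \ref{lem:reverseH_tContainment} and the rescaling $\lambda G_1(g)=G_1(\lambda^{-d}g)$ used to convert the inclusion of the $\widehat H_t$'s into the inequality $(\log t_0)^{n/d}\widehat v(t_0)\le(\log t_1)^{n/d}\widehat v(t_1)$ via minimality of $\widehat g_{t_0}$. If anything, you make explicit the homogeneity step that the paper glosses over in the analogous place ("taking volume in the inclusion"), which is a welcome clarification rather than a deviation.
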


%
%

\begin{lemma}\label{lem:log-convex-varphi}
Let $f\in\mathcal B(\R^n)$ with $\Vert f\Vert_\infty=f(0)=1$ and let  $d\in\N$ even. Then
$r\in [0,+\infty)\mapsto \widehat \phi(e^r)$ is log-convex. Moreover, if
$\widehat \phi(t_0)=\widehat \phi(t_1)=\min_{t\geq 1}\widehat{\phi}(t)$, then $t_0=t_1$.
\end{lemma}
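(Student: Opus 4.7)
The plan is to first establish log-convexity of $\widehat v$ in the variable $\log t$, from which log-convexity of $\widehat\phi(e^r)=e^r\widehat v(e^r)$ will follow almost immediately, and then to extract uniqueness of the minimizer from the equality case of Hölder's inequality combined with the homogeneity of the polynomials $\widehat g_t$.

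For the log-convexity step I would mimic the argument in Lemma \ref{lem:prop_of_v}, but with the logarithmic convex combination in place of the $d$-th power one. Fix $t_0,t_1\ge 1$, $\theta\in[0,1]$, and set $t_\theta:=t_0^{1-\theta}t_1^\theta$, so that $\log t_\theta=(1-\theta)\log t_0+\theta\log t_1$. The feasibility condition $f(x)\le t_i\exp(-\widehat g_{t_i}(x))$ rewrites, at points with $f(x)>0$, as $\widehat g_{t_i}(x)\le \log(t_i/f(x))$, and a convex combination gives $(1-\theta)\widehat g_{t_0}(x)+\theta\widehat g_{t_1}(x)\le \log(t_\theta/f(x))$. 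By Lemma \ref{lem:equiv2}, $(1-\theta)\widehat g_{t_0}+\theta\widehat g_{t_1}$ is a feasible candidate for the Lasserre minimization defining $\widehat g_{t_\theta}$, so $\widehat v(t_\theta)\le |G_1((1-\theta)\widehat g_{t_0}+\theta\widehat g_{t_1})|$. Applying Lemma \ref{lem:IntegralOfg} and then Hölder's inequality to $\exp(-((1-\theta)\widehat g_{t_0}+\theta\widehat g_{t_1}))$, exactly as in the chain \eqref{convexity}, closes the inequality to $\widehat v(t_\theta)\le \widehat v(t_0)^{1-\theta}\widehat v(t_1)^\theta$. Taking logarithms, $\log\widehat\phi(e^r)=r+\log\widehat v(e^r)$ is the sum of an affine and a convex function, hence convex, which is the claimed log-convexity of $\widehat\phi(e^r)$.

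For the uniqueness assertion, suppose $\widehat\phi(t_0)=\widehat\phi(t_1)=c:=\min_{t\ge 1}\widehat\phi(t)$ with $1\le t_0\le t_1$. Convexity of $r\mapsto \log\widehat\phi(e^r)$ forces $\widehat\phi\equiv c$ on $[t_0,t_1]$, and hence $\widehat v(t)=c/t$ on that interval. For any intermediate $t_\theta=t_0^{1-\theta}t_1^\theta$, $\theta\in(0,1)$, direct substitution gives $\widehat v(t_\theta)=c/t_\theta=\widehat v(t_0)^{1-\theta}\widehat v(t_1)^\theta$, so each inequality in the log-convexity chain must collapse to an equality. The main subtlety — and what I expect to be the crucial step — is reading off the equality case in Hölder's inequality, which forces $\exp(-\widehat g_{t_0})=\mu\exp(-\widehat g_{t_1})$ almost everywhere for some $\mu>0$, i.e.\ $\widehat g_{t_0}-\widehat g_{t_1}$ is constant on $\R^n$. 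Since both are homogeneous polynomials of degree $d\ge 2$, they vanish at the origin, forcing the constant to be $0$ and hence $\widehat g_{t_0}=\widehat g_{t_1}$. Therefore $\widehat v(t_0)=\widehat v(t_1)$, and combining with $t_0\widehat v(t_0)=t_1\widehat v(t_1)$ yields $t_0=t_1$, as desired.
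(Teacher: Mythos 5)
Your proposal is correct and follows essentially the same route as the paper: both establish a feasibility statement for the convex combination $(1-\theta)\widehat g_{t_0}+\theta\widehat g_{t_1}$ at the geometric mean $t_\theta=t_0^{1-\theta}t_1^\theta$ (you via Lemma \ref{lem:equiv2}, the paper via the equivalent pointwise inequality directly), apply the Lasserre minimality of $\widehat g_{t_\theta}$ and then H\"older's inequality via Lemma \ref{lem:IntegralOfg}, and extract uniqueness from the equality case of H\"older together with $\widehat g_{t_i}(0)=0$. The only cosmetic difference is that you pass through log-convexity of $\widehat v$ and add the affine term $r$ afterwards, whereas the paper carries the $e^{r_i}$ factors inside the H\"older step and concludes directly with log-convexity of $\widehat\phi(e^r)$.
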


\begin{proof}
We have that for any $t_0,t_1\in[1,+\infty)$, $f(x)\leq \exp(r_i-\widehat g_{t_i}(x))$ for $i=0,1$, where $t_i=e^{r_i}$. Then
\[
\begin{split}
f(x)&\leq\left(\exp(r_0-\widehat g_{t_0}(x))\right)^{1-\theta}
\left(\exp(r_1-\widehat g_{t_1}(x))\right)^{\theta}
\\
&=\exp((1-\theta)r_0+\theta r_1-((1-\theta)\widehat g_{t_0}(x)+\theta \widehat g_{t_1}(x))).
\end{split}
\]
Since $(1-\theta)\widehat g_{t_0}+\theta\widehat  g_{t_1}\in\mathbb H_d(\R^n)$,
the minimality of $\widehat g_{\exp((1-\theta)r_0+\theta r_1)}$ implies that
$|G_1(\widehat g_{\exp((1-\theta)r_0+\theta r_1)})|\leq|G_1((1-\theta)\widehat g_{t_0}+\theta \widehat g_{t_1})|$.
Therefore, using Hölder's inequality and Lemma \ref{lem:IntegralOfg},
\[
\begin{aligned}
 |G_1(\widehat g_{\exp((1-\theta)r_0+ \theta r_1)})|
 &\le
 |G_1((1-\theta) \widehat g_{t_0}+ \theta  \widehat g_{t_1})|
 \\
 &=\Gamma(\tfrac{n}{d}+1)^{-1}
 \int_{\R^n} e^{-((1-\theta) \widehat g_{t_0}(x)+ \theta \widehat  g_{t_1}(x))}\,dx
 \\
 &\le
 \left(\Gamma(\tfrac{n}{d}+1)^{-1}\int_{\R^n} e^{-\widehat g_{t_0}(x)}\,dx\right)^{1-\theta}
 \left(\Gamma(\tfrac{n}{d}+1)^{-1}\int_{\R^n} e^{-\widehat g_{t_1}(x)}\,dx\right)^{\theta}
 \\
 &=
 |G_1(\widehat g_{t_0})|^{1-\theta}|G_1(\widehat g_{t_1})|^{\theta},
\end{aligned}
\]
and thus $\widehat{\phi}(\exp((1-\theta)r_0+\theta r_1))\leq \widehat{\phi}(e^{r_0})^{1-\theta}\widehat{\phi}(e^{r_1})^{\theta}$,
i.e., $\widehat\phi(e^r)$ is log-convex for $r\in[0,+\infty)$.

Let us now assume that $0\leq r_0\leq r_1$, $t_i=e^{r_i}$ are such that $\widehat\phi(e^{r_0})=\widehat\phi(e^{r_1})=\min_{r\geq 0}\widehat{\phi}(e^r)$.
This implies that for every $\theta\in[0,1]$,
$$
|G_1(\widehat g_{\exp((1-\theta)r_0+\theta r_1)})|=|G_1(\widehat g_{t_0})|^{1-\theta}|G_1(\widehat g_{t_1})|^\theta,
$$
 which by H\"older's equality
cases means that $e^{-\widehat g_{t_0}}=ce^{-\widehat g_{t_1}}$, for some $c>0$. Since $\widehat g_{t_i}(0)=0$ for $i=0,1$,
then $c=1$, thus $\widehat g_{t_0}=\widehat g_{t_1}$, and hence $|G_1(\widehat g_{t_0})|=|G_1(\widehat g_{t_1})|$ from which
we get that $t_0=e^{r_0}=e^{r_1}=t_1$, concluding the proof.
\end{proof}

%
%

Now we prove the existence and uniqueness of a global minimum for \textit{Problem \ref{problem2}}.

\begin{proof}[Proof of Theorem \ref{thm:prob2}]
By Lemma \ref{lem:log-convex-varphi}, $\widehat\phi(e^r)=e^r|G_1(\widehat g_t)|$ is a log-convex function, and thus convex in $[0,+\infty)$. This shows, in particular, that $\widehat{\phi}$
is continuous in $(1,+\infty)$.

The same ideas used in the proof of Theorem \ref{thm:prob1} can be used to see that
\begin{equation*}
\lim_{t\to1^+}\widehat v(t)=\widehat v(1).
\end{equation*}

By Lemma \ref{lem:propofv2}, $\widehat\phi$ is increasing on $[e^\frac nd,+\infty)$,
and since $\widehat\phi$ is continuous, it attains its minimums in $t_0\in[1,e^{\frac nd}]$.

Finally, Lemma \ref{lem:log-convex-varphi} shows that if $\widehat\phi$ attains the minimum,
it must be at a single point $t=t_0$,  concluding the proof.
\end{proof}

Now we can characterize the minimization point using the Karush-Kuhn-Tucker conditions (see \cite{APE}, \cite{hiriart}).
In order to do so, we first show a global convexity property of the function to be minimized.

\begin{lemma}\label{lemma:global_convex} The feasible set $\F_d(\R^n)$ is convex and
the objective function  $W:[0,+\infty)\times\F_d(\R^n)$ given by
$$W(r,g)=e^r\int_{\mathbb R^n}\exp(-g(x))dx$$
is log-convex and strictly convex.
\end{lemma}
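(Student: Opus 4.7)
My plan is to treat the three claims (convexity of $\F_d(\R^n)$, log-convexity of $W$, strict convexity of $W$) separately, with the bulk of the work in the second and third.

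\emph{Convexity of the feasible set.} That $\F_d(\R^n)$ is a convex cone is already recorded in Lemma~\ref{F_d}(1), so I would simply cite it. The domain $[0,+\infty)\times\F_d(\R^n)$ is then a product of convex sets, hence convex.

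\emph{Log-convexity of $W$.} I would take logarithms: $\log W(r,g)=r+\log\int_{\R^n}e^{-g(x)}\,dx$. The first summand is affine in $(r,g)$. For the second summand, fix $g_0,g_1\in\F_d(\R^n)$ and $\theta\in[0,1]$ and apply H\"older's inequality exactly as in Lemma~\ref{lem:log-convex-varphi}:
$$
\int_{\R^n} e^{-((1-\theta)g_0(x)+\theta g_1(x))}\,dx
\le
\left(\int_{\R^n} e^{-g_0(x)}\,dx\right)^{1-\theta}
\left(\int_{\R^n} e^{-g_1(x)}\,dx\right)^{\theta}.
$$
Taking logarithms gives convexity in $g$, and summing with the affine term gives convexity of $\log W$, i.e.\ log-convexity of $W$. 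In particular $W$ itself is convex, being $e^{\log W}$.

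\emph{Strict convexity of $W$.} This is the step that requires some care, because $\log W$ is only affine along the $r$-direction, so strict convexity of $\log W$ does not hold on every segment. I would split a nontrivial segment from $(r_0,g_0)$ to $(r_1,g_1)$ into two cases. If $g_0\neq g_1$, the H\"older inequality above is strict: equality would force $e^{-g_0}$ and $e^{-g_1}$ to be proportional a.e., and since both are continuous and equal to $1$ at the origin (because $g_i(0)=0$), this would imply $g_0=g_1$. So $\log W$ is strictly convex along such a segment, and $W=e^{\log W}$ is strictly convex there as well. If instead $g_0=g_1=:g$ but $r_0\neq r_1$, then along the segment $W((1-\theta)r_0+\theta r_1,g)=e^{(1-\theta)r_0+\theta r_1}\!\!\int e^{-g}$, which is $e^{\text{affine}}$ with nonzero slope, hence strictly convex in $\theta$. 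In either case $W$ is strictly convex.

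The main obstacle is precisely the second case: one must observe that even when $\log W$ is merely affine along a segment (fixed $g$, varying $r$), the exponential restores strict convexity. An alternative that avoids case splitting is to compute the Hessian
$$
\mathrm{Hess}\,W(r,g)[s,h]=e^r\int_{\R^n}(s-h(x))^2 e^{-g(x)}\,dx,\qquad h(x)=\sum_{\alpha\in\N_d^n}h_\alpha x^\alpha,
$$
which vanishes only when the polynomial $s-h(x)$ is identically zero, i.e.\ when $(s,h)=0$; this gives positive definiteness everywhere and hence strict convexity in one stroke.
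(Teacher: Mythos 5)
Your proof is correct and rests on the same key ingredients as the paper's: H\"older's inequality for log-convexity, and the H\"older equality case together with $g_i(0)=0$ for strictness. The organization of the strict-convexity step differs slightly: the paper argues contrapositively, assuming equality in the convexity inequality, invoking the equality case of AM--GM to force $W(r_0,g_0)=W(r_1,g_1)$ and equality in H\"older simultaneously, and then concluding $g_0=g_1$, hence $|G_1(g_0)|=|G_1(g_1)|$ and $r_0=r_1$; you instead split directly into the cases $g_0\neq g_1$ (strict H\"older gives strict log-convexity, and exponentiating preserves strictness) and $g_0=g_1$, $r_0\neq r_1$ (where $W$ restricted to the segment is an exponential of a nonconstant affine function). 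Both are valid and of comparable length; your direct case split arguably makes the mechanism for strictness more transparent. One caveat about your Hessian alternative: the formula $\mathrm{Hess}\,W(r,g)[s,h]=e^r\int(s-h(x))^2e^{-g(x)}\,dx$ requires the moments $\int x^{\alpha+\beta}e^{-g}\,dx$ to be finite, which by \cite[Thm.~2.4]{lasserre} is guaranteed only for $g\in\interior\F_d(\R^n)$, not on all of $\F_d(\R^n)$ where the lemma is stated; so that route proves strict convexity only on the interior, and your case-split argument (or the paper's) is needed for the full claim.
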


\begin{proof}
Let $(r_i,g_i)\in[0,+\infty)\times\F_d(\mathbb R^n)$, $i=0,1$.
Then,
Hölder's inequality 
 implies that
\begin{equation}\label{eq:holder_in_phi}
\int_{\R^n} e^{-((1-\theta) g_0(x)+ \theta  g_1(x))}\,dx
\le
 \left(\int_{\R^n} e^{-g_0(x)}\,dx\right)^{1-\theta}
 \left(\int_{\R^n} e^{-g_1(x)}\,dx\right)^{\theta},
\end{equation}
thus showing that  $\F_d(\R^n)$ is convex and
$$
W((1-\theta)(r_0,g_0)+\theta(r_1,g_1))\leq W(r_0,g_0)^{1-\theta}W(r_1,g_1)^\theta,
$$
 and hence the
log-convexity of $W$.

Notice that the Arithmetic-Geometric mean inequality implies that $W$ is convex.

Furthermore, we show now that it is strictly convex.
First of all, let us suppose  $(r_0,g_0),(r_1,g_1)$ and $\theta\in[0,1]$ with
$$
W((1-\theta)(r_0,g_0)+\theta(r_1,g_1))=(1-\theta)W(r_0,g_0)+\theta W(r_1,g_1).
$$
The equality case of the AG-mean inequality directly
implies that $W(r_0,g_0)=W(r_1,g_1)$. Moreover, it also means that
there is equality in \eqref{eq:holder_in_phi}. Hence, the equality case of Hölder's inequality
implies the existence of $c>0$ such that
$e^{-g_0}=c e^{-g_1}$.
Since $g_0(0)=g_1(0)=0$,  $c=1$. Therefore $g_0=g_1$. Thus
$|G_1(g_0)|=|G_1(g_1)|$, and then $r_0=r_1$,
hence showing the strict convexity of $W$.
\end{proof}

\begin{proof}[Proof of Theorem \ref{thm:touching2}]
Let $W$ be   defined  as in Lemma \ref{lemma:global_convex}.
\textit{Problem \ref{problem2}} then rewrites as the following minimization problem:
\[
\min_{(r,g)\in C} W(r,g),
\]
where $$C=\{(r,g)\in[0,+\infty)\times \F_d(\R^n):
r-\log f(x)-g(x)\geq 0  \text{ for all } x\in S_f\}$$
with $S_f=\{x\in\R^n:f(x)\not=0\}$.

Any $g\in\mathbb H_d(\mathbb R^n)$ can be uniquely written as $g=\sum_{\alpha\in\N_d^n}g_\alpha x^\alpha$, so we can identify each $g$
with its coordinate vector $(g_\alpha)_\alpha\in\R^{h_d(n)}$.
Notice that $r-g(x)=r-\sum_{\alpha\in\N_d^n} g_\alpha x^\alpha=\langle(r,(g_\alpha)_\alpha),(1,-(x^\alpha)_\alpha)\rangle$.
Thus the feasible set can be rewritten as
\begin{equation}\label{eq:Domain_C}
\begin{split}
C=\{&(r,(g_\alpha)_\alpha)\in[0,+\infty)\times \R^{h_d(n)}:
\\
&g\in\F_d(\R^n),\,
\langle(r,(g_\alpha)_\alpha),(1,-(x^\alpha)_\alpha)\rangle\geq\log f(x)
\text{ for all } x\in S_f
\},
\end{split}
\end{equation}
so it is convex, as it is the intersection of half-spaces.

Assume condition (i) holds. 
Notice that, taking $t_2=e^{r_2}$,
$(r_2,g_2)\in\partial C$. Otherwise, we can take $(r,g_2)\in C$ with $r<r_2$, and $W(r,g_2)<W(r_2,g_2)$ contradicting that $W$ attains its minimum on $C$ at $(r_2,g_2)$.

Since $C$ is described in \eqref{eq:Domain_C} as  intersection of halfspaces,
then the supporting cone $S_C(r_2,g_2)$ of $C$ at $(r_2,g_2)$ is
given by the set of all such halfspaces whose boundaries contain $(r_2,g_2)$, i.e.,
\[
\begin{split}
 S_C(r_2,g_2)=&\{(r,(g_\alpha)_\alpha)\in[0,+\infty)\times \R^{h_d(n)}:
 \\
& g\in \F_d(\R^n),\,
\langle (r,(g_\alpha)_\alpha),(1,-(x^\alpha)_\alpha)\rangle\geq \log f(x)
\text{ for all }x\in S^*_f\},
\end{split}
\]
where $S^*_f=\{x\in S_f: r_2-g_2(x)=\log f(x)\}$. Thus we have that
\[
N_C(r_2,g_2)=\mathrm{pos}(\{(-1,(x^\alpha)_\alpha):x\in S^*_f\}),
\]
where $N_C(z)$ is the outer normal cone of $C$ at $z$, for every convex set $C$ and every $z\in\partial C$,
and $\text{pos}(R)$ is the positive hull of $R$, the smallest convex cone containing $R$.

Since $W$ is a differentiable  strictly convex function, and $C$ is a convex set,
 under the assumption
$(r_2,g_2)\in(0,+\infty)\times\mathrm{int}(\F_d(\R^n))$ the Karush-Kuhn-Tucker conditions (see \cite{APE}) characterize $(r_2,g_2)$ by
\begin{equation}
\label{eq:touchpoints}
-\nabla W(r_2,g_2)\in N_C(r_2,g_2).
\end{equation}
Besides, by \eqref{eq:DerivativeOfPsi}
$$
\nabla W(r,g)= \left(W(r,g),\left(-e^r\int_{\mathbb R^n}x^\alpha\exp(-g(x))dx\right)_\alpha\right).
$$

Moreover, since $N_C(r_2,g_2)\subset\R^{h_d(n)+1}$ is a convex cone, using Carath\'eodory's theorem for cones,
the previous condition \eqref{eq:touchpoints}
is equivalent
to the existence of $x_1,\dots,x_m\in\mathbb R^n$, $m\leq h_d(n)+1$, with
$r_2-\log f(x_i)-g_2(x_i)=0$, and $\lambda_i>0$, $1\leq i\leq m$, such that
\[
\left(-W(r_2,g_2),\left(e^r\int_{\mathbb R^n}x^\alpha\exp(-g(x))dx\right)_\alpha\right)
=
\sum_{i=1}^m\lambda_i(-1,(x_i^\alpha)_\alpha)
\]
which proves (ii).

Conversely, suppose condition (ii) holds. The Karush-Kuhn-Tucker conditions imply that $(r_2,g_2)$ is an extreme point,
and thus by the convexity of $W$, a local minimization point of $W$ on $C$.
Since $W$ is strictly convex and $C$ is a convex set, this local minimization point must be the only global minimization point, and (i) is proved.
\end{proof}

\begin{rmk} Note that our arguments work for the case $ |\widehat H_1(f)|=+\infty$ (as long as $\widehat H_t(f)$ is bounded for some $t>1$), since then the minimum is attained at some $t_2\in(1,+\infty)$. This remark allows us to apply our results to a more general set of functions outside $\mathcal B(\R^n)$, as shown in the following example. The only case we can not use our arguments is when  $\widehat H_1(f)$ is unbounded but $ |\widehat H_1(f)|<+\infty$ (see Example \ref{example:unboundedfinite}). For this reason, it would be very interesting to get an extension of Lasserre's theorem for sets of the form $\widehat H_1(f)$.
\end{rmk}

\begin{example}
Let $f(x)=\exp(-\|x\|_2^d)$ for some $d\in\N$ even. \textit{Problem \ref{problem2}} then makes sense
for $f$ for every even $d'\in\{2,\dots,d\}$. Since
\[
\begin{split}
\widehat{H}_t(f) & =\bigcup_{0<\lambda<1}\frac{1}{(\log t-\log\lambda)^\frac{1}{d'}}K_\lambda(f)\\
& =\bigcup_{0<\lambda<1}\frac{(-\log\lambda)^\frac1d}{(\log t-\log\lambda)^\frac{1}{d'}}B^n_2.
\end{split}
\]
Note that $\widehat H_1(f)=\R^n$.
Since the maximum of $(-\log\lambda)^\frac1d/(\log t-\log\lambda)^{\frac{1}{d'}}$ is attained at
$\lambda_M=t^{-\frac{d'}{d-d'}}$, then
\[
\widehat{H}_t(f)=
\frac{d'^{\frac1d}}{d^{\frac{1}{d'}}}
\left(\frac{d-d'}{\log t}\right)^{\frac{1}{d'}-\frac1d}B^n_2=G_1(g_t),
\]
where
\[
g_t(x)=\frac{d}{d'^{\frac{d'}{d}}(d-d')^{1-\frac{d'}{d}}}(\log t)^{1-\frac{d'}{d}}\|x\|_2^{d'}.
\]
Then
\[
\begin{split}
\min_{t\geq 1}t|G_1(g_t)| & =
\frac{d'^{\frac{n}{d}}}{d^{\frac{n}{d'}}}(d-d')^{n(\frac{1}{d'}-\frac1d)}\omega_n\min_{t\geq 1} \frac{t}{(\log t)^{n(\frac{1}{d'}-\frac1d)}},
\end{split}
\]
where $\omega_n=|B_2^n|$. The minimum above is attained at $t_2=e^{n(\frac{1}{d'}-\frac{1}{d})}$, and
then
\[
f(x)\leq t_2 e^{-g_{t_0}(x)}
=e^{n(\frac{1}{d'}-\frac{1}{d})}
\exp\left(-\frac{d^\frac{d'}{d}}{d'}{n}^{1-\frac{d'}{d}}\|x\|_2^{d'}\right)
\]
is the unique solution to \textit{Problem \ref{problem2}} for $f$ with
$$
t_2\int_{\mathbb R^n}\exp(-g_{t_0}(x))dx
=
\omega_n\Gamma(\tfrac{n}{d'}+1)
\left.
\left(\frac{n}{ed}\right)^{n/d}
\middle/
\left(\frac{n}{ed'}\right)^{n/d'}
\right. .
$$
\end{example}

\section{Application: $d$-outer volume and integral ratio}\label{sec:Application_OVR_d_oir_d}

Given a compact set $K\subset\R^n$, it is a natural question to consider how well does the volume of the level set of the $d$-Lasserre L\"owner polynomial approximates the volume of $K$. In the context of convex bodies,  $K\in\mathcal K^n$ 
(resp. centrally symmetric convex bodies $K\in\mathcal K^n_0$), it was already Ball in \cite[Thms. 1$\&$2]{Ball1991} who showed, by means of the Brascamp-Lieb inequality, that
the largest ratio between the volumes of a compact convex set $K$ and its John ellipsoid is attained
when $K$ is a simplex (resp. a cube when $K\in\mathcal K^n_0$). 
Later on, Barthe (see \cite[Thms. 2$\&$3]{Barthe1998_2}) 
showed, by means of a reverse Brascamp-Lieb inequality \cite{Barthe1998}, that, in the case of the L\"owner ellipsoid, the analogous largest ratio
between the volume of $G_1(g_2)$ and the volume of $K$ (assuming $G_1(g_2)$ is the L\"owner elliposid of $K$) is attained when $K$ is a centered simplex (resp. a crosspolytope when $K\in\mathcal K^n_0$) .

The existence of the $d$-Lasserre-L\"owner polynomial $g_d$ naturally leads to define the \emph{$d$-outer volume  ratio}
$\mathrm{o.v.r}_d(K)$ for any given $K\in\mathcal K^n$ as 
\[
\mathrm{o.v.r}_d(K)=\left(\frac{|G_1(g_d)|}{|K|}\right)^{1/n},
\]
for every even $d\in\mathbb N$.

Since $g_d$ is homogeneous of degree $d$, $g_d$ is an even function, and thus $G_1(g_d)$ is a centrally symmetric star-shaped with respect to the origin set. 
The first non-trivial examples on how well we can approximate $K\in\mathcal K^n_0$ by $G_1(g_d)$ were computed by Lasserre (see \cite[Thm. 3.4]{lasserre}), for the $2$-dimensional cube in the cases $d=4$ and $d=6$.

Benko and Kro\'o showed (see Theorem 2 and Lemma 5 in \cite{BenkoKroo2009}) that if 
$K\in\mathcal K^n_0$ has $C^{1+\varepsilon}$ boundary, for some $\varepsilon\in(0,1]$,
then for any $\tau\in(0,1)$ and any even degree $d$ there exists a sequence of polynomials $g_d\in\mathbb H_d(\mathbb R^n)$
such that $|g_d(x)-1|\leq c d^{-\tau\varepsilon}$, for every $x\in\partial K$ and some constant $c>0$, only depending on $K$. By the homogeneity of $g_d$ and of the Minkowski gauge $\|\cdot\|_{K}$, the inequality above can be rewritten as
$$
\forall x\in\R^n\quad
(1-c d^{-\tau\varepsilon})\|x\|_K^d
\le
g_d(x)
\le
(1+c d^{-\tau\varepsilon})\|x\|_K^d.
$$
This inequality leads to the following theorem:

\begin{thm}\label{thm:approx_large_d}
Let $K\in\mathcal K^n_0$. Then $\displaystyle\lim_{d\to+\infty}\mathrm{o.v.r}_d(K)=1$.
\end{thm}

\begin{proof} 
Fix $\delta>1$. A standard approximation argument gives us some $Q\in\mathcal K^n_0$ of $C^2$ boundary,
with $K\subset Q$ and $(|Q|/|K|)^{1/n}\leq \sqrt{\delta}$. Let us apply Benko and Kro\'o result above to $Q$ (with $\varepsilon=1$ and any fixed $\tau\in(0,1)$) to get, for any even $d\geq 2$ a sequence of homogeneous polynomials $g_{d,\delta}\in\mathbb H_d(\mathbb R^n)$ and a constant $c_\delta>0$
such that 
$$
(1-c_\delta d^{-\tau})\|x\|_Q^d.
\le
g_d(x)
\le
(1+c_\delta d^{-\tau})\|x\|_Q^d\quad\forall x\in\R^n.
$$

Define $\overline{g}_{d,\delta}=(1+c_\delta d^{-\tau})^{-1} g_{d,\delta}\in \H_d(\R^n)$. We have $\overline{g}_{d,\delta}(x)\le1 $ for every $x\in Q$, which means  $Q\subset G_1(\overline{g}_{d,\delta})$.

On the other hand, if $x\in G_1(g_{d,\delta})$, then
$\|x\|_Q\le(1-c_\delta d^{-\tau})^{-1/d}$, which implies $G_1(g_{d,\delta})\subset (1-c_\delta d^{-\tau})^{-1/d}Q$.

Using the fact that $|G_1(\overline{g}_{d,\delta})|=
(1+c_\delta d^{-\tau})^{{n/d}}|G_1(g_{d,\delta})|$,

\[
\begin{split}
\left(\frac{|G_1(\overline{g}_{d,\delta})|}{|K|}\right)^\frac1n 
&= 
(1+c_\delta d^{-\tau})^{\frac{1}{d}} 
\left(
\frac{|Q|}{|K|}
\frac{|G_1(g_{d,\delta})|}{|Q|}
\right)^\frac1n
\\
& \leq 
\sqrt{\delta}
\left(\frac{1+c_\delta d^{-\tau}}{1-c_\delta d^{-\tau}}\right)^\frac1d \leq \delta,
\end{split}
\]
for any $d\ge d_\delta$ and large enough even $d_\delta$.
Since $K\subset Q \subset G_1(\overline{g}_{d,\delta})$, choosing the sequence $\delta_k=\frac{k+1}{k}$ and taking  $d_{\delta_{k+1}}>d_{\delta_{k}}$, the sequence of polynomials
$$
g_{d_2,2},g_{d_2+2,2},\dots,g_{d_{3/2},3/2},g_{d_{3/2}+2,3/2},\dots
$$
immediately proves the result.
\end{proof}

\begin{rmk}
 Rogers and Shephard showed  (see \cite{RS}) that if $K\in\mathcal K^n$ with $0\in K$, then $|\mathrm{conv}(K\cup(-K))|\leq 2^n|K|$. Considering $g^{(1)}_d$ and $g_d^{(2)}$
the $d$-Lasserre-L\"owner polynomials of $K$ and $\mathrm{conv}(K\cup(-K))$, respectively, we have that $K\subset \mathrm{conv}(K\cup(-K))\subset G_1(g_d^2)$, and 
\[
\textrm{o.v.r}_d(K)=\left(\frac{|G_1(g^{(1)}_d)|}{|K|}\right)^{\frac1n} \leq 2\left(\frac{|G_1(g_d^{(2)})|}{|\mathrm{conv}(K\cup(-K))|}\right)^\frac1n.
\]
Therefore, if $K\in\mathcal K^n$ with $0\in K$, we have that $$\limsup_{d\to+\infty}\textrm{o.v.r}_d(K)\le2.$$
\end{rmk}

A natural functional extension  of the $d$-outer volume ratio for any  $f\in\mathcal F(\mathbb R^n)$ with $\Vert f\Vert_\infty=f(0)=1$ is the \textit{$d$-outer integral ratio}
$$
\textrm{o.i.r}_d(f)=\left(\left.
t\int_{\R^n} \exp(-g(x)^{1/d})\,dx\right/
\int_{\R^n}f(x)\,dx
\right)^{1/n}
$$
where $(t,g)$ minimizes 
\textit{Problem \ref{problem1}}.
For $d=2$, a similar definition is considered in \cite{IT}.

Theorem \ref{thm:approx_large_d} can also be extended to some examples whenever we approximate log-concave functions.
For instance, we can show that
if $f\in\mathcal{F}(\R^n)$, then
\[
\lim_{d\to+\infty}
\mathrm{o.i.r}_d(f) = 1
\]
whenever 
$f(x)=e^{-\|x\|_K}$ with $K\in\mathcal K^n_0$. Indeed, it was shown in  Example \ref{examples1}$(1)$ that $H_t(f)=\mathrm{int} (K)$ if $t>1$ and $H_1(f)=K$.
Since $K\in\mathcal K^n_0$, we can take $g_d$ a sequence of homogeneous polynomials given by Theorem \ref{thm:approx_large_d} such that
$K\subset G_1(g_d)$ with
$|G_1(g_d)|/|K|\rightarrow 1$ when $d\rightarrow+\infty$.
Since $\min_{t\geq 1}t|G_1(g_d)|=|G_1(g_d)|$, and $H_1(f)=K\subset G_1(g_d)$ for every even $d\geq2$,
we have that $f(x)\leq \exp(-g_d(x)^{1/d})$ (see Lemma \ref{lem:equiv}) and
\[
\mathrm{o.i.r}_d(f)
\leq
\left(\left.
\int_{\mathbb R^n} \exp(-g_d(x)^{1/d})\,dx
\right/
\int_{\mathbb R^n}f(x)\,dx
\right)^\frac1n
=\left(\frac{|G_1(g_d)|}{|K|}\right)^\frac1n \rightarrow 1
\]
as $d\rightarrow+\infty$ (see also Lemma \ref{lem:IntegralOfg}).

\end{document}